\documentclass[12pt,a4paper]{article}
\usepackage[utf8]{inputenc}
\usepackage{amsmath, amssymb, amsthm}
\usepackage{graphicx}

\usepackage{float} 
\usepackage{hyperref}  
\usepackage{cleveref}
\usepackage{geometry}
\usepackage{subcaption}
\newtheorem{theorem}{Theorem}
\newtheorem{lemma}{Lemma}

\theoremstyle{remark}
\newtheorem*{remark}{Remark}
\theoremstyle{definition}
\newtheorem{definition}{Definition}
\DeclareMathOperator{\Slash}{Slash}

\usepackage[T1]{fontenc}

\title{Topology of a Uniform Spanning Tree on a Cylinder}

\author{%
  Nikita Kalinin\(^{1,2}\) \and
  Denis Rakhmankin\(^{1,2,3}\)
}

\date{%
  \(^{1}\) Guangdong Technion -- Israel Institute of Technology, 241 Daxue Road, Shantou, Guangdong, China, 515063, \\
  \(^{2}\) Technion -- Israel Institute of Technology, Haifa, 3200003, Israel\\
  \(^{3}\) Saint Petersburg State University, Saint Petersburg, Russia\\[1ex]
  \texttt{nikita.kalinin@gtiit.edu.cn},\;
  \texttt{s.9166556309@ya.ru}\\[1ex]
  \today
}

\begin{document}

\maketitle

\begin{abstract}
We study uniform spanning trees (USTs) on the discrete cylinder
\(G_{n,m}=C_n\times P_m\), in the regime where the circumference
\(n\) is fixed and the length \(m\) tends to infinity.
Using Wilson's algorithm, with the initial root on one boundary ring
and the first walk started from the other, we single out a
\emph{trunk} \(L\subset T\): the first loop-erased random-walk path
connecting the two boundary rings. We prove that the tree-distance from
any fixed vertex to this trunk has an exponential tail with constants
independent of \(m\). As a consequence, the longest branch attached to the trunk is
at most logarithmic in the length of the cylinder with high probability. 

Our motivation comes from the Abelian sandpile model on cylinders, and in
particular from the step-like, or ``ladder'', avalanche-size distributions
observed numerically by Eckmann--Nagnibeda--Perriard in
\cite{EckmannNagnibedaPerriard2023}. Via Dhar's burning algorithm, recurrent
sandpile configurations correspond to spanning trees, suggesting that the
geometry of a typical UST may influence how avalanches propagate along the
cylinder. 
The branch estimates above, their wired analogue, and the exponential estimate
for the interface separating vertices whose paths to the sink pass through
opposite boundary rings are intended as a first step towards isolating
geometric UST observables that may be relevant to these plateau phenomena.
\end{abstract}

\section{Introduction}

The Abelian sandpile model (ASM) on cylindrical graphs provides a playground for studying self-organized criticality via its avalanche statistics. In their recent work on sandpiles on discrete cylinders,
Eckmann--Nagnibeda--Perriard
\cite{EckmannNagnibedaPerriard2023} considered the ASM on a cylinder
of fixed circumference and large length, with dissipation through the
two boundary rings.  They uncovered a striking ``ladder'' structure in the distribution of avalanche sizes: for a wide range of intermediate sizes, avalanches appear with nearly constant probability, forming plateaus rather than a simple power-law decay. This phenomenon seems specific to cylindrical geometries and is not observed in the same form on planar boxes or in one dimension.

Dhar's burning algorithm relates recurrent configurations of the ASM on a finite graph to spanning trees of the underlying graph with a sink added~ \cite{Dhar1999AbelianSandpile}. More precisely, the burning bijection identifies recurrent sandpile
configurations with spanning trees of the graph rooted at the sink.
This makes the geometry of the associated spanning tree a natural
object to examine when studying avalanche propagation.
On infinite graphs, this correspondence underlies the construction of infinite-volume
sandpile measures and avalanche-size distributions via wired uniform spanning forests;
see, for example, \cite{AthreyaJarai2004}. The numerical observations of
Eckmann--Nagnibeda--Perriard raise a natural conceptual question: \emph{which large-scale geometric
features of the associated spanning trees could control, or at least reflect,  avalanche
propagation on a long cylinder?}

In this paper we take a first step in this direction by studying the geometry of a uniformly random spanning tree on a finite cylindrical graph. We consider the graph
\[
G_{n,m} = C_n \times P_m,
\]
the Cartesian product of a cycle of length \(n\) and a path with \(m\)
vertices, together with its wired version, obtained by attaching a
sink to the two boundary rings. Let $T$ be a uniformly chosen spanning tree of \(G_{n,m}\). A simple structural observation, made precise below, is that one can choose a trunk, namely a simple path in $T$ connecting one boundary ring to the other,
such that the tree-distance from any fixed vertex to this trunk has an
exponential tail with constants independent of the cylinder length. At the level of
Wilson's algorithm, the key estimate is that each increment added after the
trunk has a uniformly exponential length tail. 

We formalize this picture by choosing a trunk $L\subset T$ as the first loop-erased path in Wilson's algorithm connecting the two boundary rings of the cylinder;  in particular, $L$ intersects every ring of the cylinder (see Figure~\ref{fig:tree}). A \emph{branch} is a connected component of \(T\setminus L\), together with its unique attaching vertex on \(L\). Its length is the maximal tree-distance from this attaching vertex to a vertex of the component. 

Our first result is an exponential union bound for the event that some branch
has length at least \(l\): there exist constants $C = C(n) > 0$ and $\theta = \theta(n) \in (0,1)$, depending only on the circumference $n$ and independent of the length $m$, such that for all $m \ge 2$ and all $l \ge 0$,
\[
\mathbb{P}\bigl(\text{UST on } G_{n,m} \text{ has a branch off } L \text{ of length} \ge l\bigr)
\;\le\; C\,m\,  (n-1)\, \theta^{l}.
\]
This follows from the fixed-vertex exponential tail by taking a union bound
over the vertices outside \(L\), and implies that the maximal branch length is
\(O_n(\log m)\), that is, at most \(C(n)\log m\), with high probability. Thus, for fixed circumference, a typical tree may be viewed as a
one-dimensional backbone spanning the cylinder, decorated by branches
whose individual length tails are uniformly exponential and whose
maximal length is at most logarithmic in \(m\) with high probability.

Our second result is the corresponding wired statement, where the two boundary
rings are attached to a sink. This is the version naturally related to
recurrent sandpiles through Dhar's burning bijection. It says that, with high
probability, there exists a sink-trunk crossing the cylinder such that all
branches off this sink-trunk have logarithmic length. More precisely, the
failure probability is bounded by \(C(n)m^3\theta^\ell\). Thus, taking
\[
\ell=
\left\lceil
\frac{3\log m+A}{|\log\theta(n)|}
\right\rceil
\]
gives a sink-trunk whose maximal branch length is less than \(\ell\) with
probability at least \(1-C(n)e^{-A}\). In particular, for every
\(\varepsilon>0\), taking \(A=\varepsilon\log m\) gives
\[
\ell=
\left\lceil
\frac{(3+\varepsilon)\log m}{|\log\theta(n)|}
\right\rceil
\]
and failure probability at most \(C(n)m^{-\varepsilon}\). Hence the maximal
branch length is \(O_n(\log m)\) with high probability.

Besides the local geometry of branches off a trunk, we also introduce a more global
cut-type observable of a spanning tree on the wired cylinder, which we call the \emph{LR--Slash}; see Figure~\ref{fig:slash}.
For a spanning tree \(T\) of the wired cylinder \(G_{n,m}^{\mathrm{s}}\), we decompose the vertices of \(G_{n,m}\) into \emph{left} and \emph{right} classes according to their last step before reaching the sink: a vertex belongs to the left class if its unique path to the sink \(s\) uses, as its final edge, an edge from \(R_0\) to \(s\); it belongs to the right class if the final edge comes from \(R_{m-1}\). The LR--Slash of \(T\) is the set of real cylinder edges, namely edges of \(G_{n,m}\), whose endpoints lie in different left/right classes.

Our third result, Theorem~\ref{thm:Slash}, shows that the size of the
LR--Slash also has an exponential tail, with constants depending only
on \(n\): there exist \(C=C(n)>0\) and
\(\delta=\delta(n)\in(0,1)\) such that, for all \(m\ge2\) and all
\(l>0\),
\[
\mathbb{P}\!\left(\, \bigl|\Slash_{\mathrm{LR}}(T)\bigr|>l \,\right)\;\le\; C(n)\,\delta^{\,l}.
\]
Thus, in addition to the logarithmic high-probability control of the maximal
branch length, the number of real cylinder edges whose endpoints have opposite left/right labels is also tightly controlled.

From the sandpile perspective, these observables suggest a possible geometric mechanism behind avalanche propagation on long cylinders. Under the burning bijection, large-scale features of the spanning tree may constrain how avalanches move along the cylinder. In particular, the LR--Slash is a natural candidate for a tree-theoretic counterpart of the ``blocker'' structures appearing in the empirical discussion of \cite{EckmannNagnibedaPerriard2023}. We do not prove a quantitative relation between branch lengths, LR--Slash size, and avalanche-size distributions here; making such a connection precise remains an open problem.

Our proofs rely on Wilson's algorithm, which generates uniform spanning
trees from loop-erased random walks \cite{Wilson1996}, together with
elementary random-walk estimates on finite-width cylinders.
The finite-width cylindrical geometry is essential: every vertex lies
within distance \(O(n)\) of any path connecting the two boundary rings,
because such a path intersects every ring. This uniform transverse bound controls the
loop-erased increments that attach to the trunk. The remainder of the paper introduces the notation, states the main estimates, and proves the trunk, wired-trunk, and LR--Slash bounds.

\paragraph{Related work.} 
Uniform spanning trees on planar lattices and on graphs embedded in surfaces have been studied extensively.  
In particular, large-scale connection probabilities and loop-erased random walk intensities for USTs on graphs on surfaces (including annuli) are analyzed in detail in
\cite{KenyonWilson2015}. 
From an enumerative viewpoint, spanning trees on graphs with cyclic symmetry (in particular, cylindrical ``cobweb'' lattices) are counted in 
\cite{YanZhang2011}
and, in greater generality, in 
\cite{ZhangLuJin2024}.
Finite-size corrections and boundary-condition dependence for the spanning-tree partition function on square lattices, including cylindrical boundary conditions, are obtained in
 \cite{IzmailianKenna2015}.

Random walks on discrete cylinders with large bases and their connection to
random interlacements are studied in 
\cite{Windisch2010}.
There the focus is on the microscopic structure of the vacant set left by a long random walk on
$G_N\times\mathbb{Z}$ and its convergence to a random interlacement model.
Conceptually, this provides another probabilistic perspective on long
cylindrical geometries, complementary to our trunk/branch and slash
description for uniform spanning trees.

From a dynamical point of view, the Abelian sandpile model can be realized as a factor of an algebraic \(\mathbb{Z}^d\)-action on a compact abelian group, the harmonic model, as shown in \cite{SchmidtVerbitskiy2009}. Their results give a global measure-theoretic description of critical and dissipative sandpile dynamics.

The works cited above approach spanning trees, random walks, and sandpile dynamics on cylindrical or related geometries from several complementary directions: enumeration, partition functions, loop-erased random-walk observables, random-walk vacant sets, and measure-theoretic sandpile dynamics. Our emphasis is different. We study two concrete geometric features of a UST on a finite-width cylinder:
the branch structure off a Wilson trunk, and the left--right interface in the
wired model.

The branch estimates record a simple but useful consequence of fixed circumference: after the trunk is fixed, all later Wilson increments have uniformly exponential length tails. The LR--Slash estimate is more global. It gives an exponential tail for the number of real cylinder edges whose
endpoints have opposite left/right labels. To the best of our knowledge, the LR--Slash observable in this form,
defined by the final boundary entrance of the tree path to a single wired
sink, has not previously been studied on finite cylinders. Thus the contribution of the paper is twofold: first, to make explicit the trunk-and-branches geometry of USTs on finite-width cylinders, and second, to prove an exponential tail bound for the size of this left--right interface. 

\section{Notation and main statements}
\begin{definition}
For \(n\ge3\) and \(m\ge2\), the \emph{cylindrical graph}
\(G_{n,m}\), shown in Figure~\ref{fig:a}, is the Cartesian product
\[
G_{n,m}=C_n\times P_m .
\]
Equivalently, its vertex set is
\[
V(G_{n,m})=
\{(i,j): i\in\mathbb{Z}_n,\ j\in\{0,\dots,m-1\}\},
\]
and its edge set consists of
\[
\begin{aligned}
&\text{ring edges:}
&&\bigl\{\{(i,j),(i+1,j)\}: i\in\mathbb{Z}_n,\ 
j=0,\dots,m-1\bigr\},\\
&\text{path edges:}
&&\bigl\{\{(i,j),(i,j+1)\}: i\in\mathbb{Z}_n,\ 
j=0,\dots,m-2\bigr\}.
\end{aligned}
\]
We denote the rings of the cylinder by
\[
R_k=\{(i,k): i\in\mathbb{Z}_n\},
\qquad k=0,1,\dots,m-1.
\]
The two boundary rings are \(R_0\) and \(R_{m-1}\).

The \emph{wired cylindrical graph} \(G_{n,m}^{\mathrm{s}}\) is
obtained from \(G_{n,m}\) by adding one new vertex \(s\), called the
sink, and connecting it to all vertices of the two boundary rings:
\[
V(G_{n,m}^{\mathrm{s}}):=V(G_{n,m})\cup\{s\},
\]
and
\[
E(G_{n,m}^{\mathrm{s}})
:=
E(G_{n,m})
\cup
\bigl\{\{s,v\}: v\in R_0\cup R_{m-1}\bigr\}.
\]
\end{definition}

We will mainly be interested in the finite-width regime, where \(n\)
is fixed and \(m\) is large.

\begin{figure}[H]
  \centering
  \begin{subfigure}{0.48\textwidth}
    \centering
    \includegraphics[width=\linewidth]{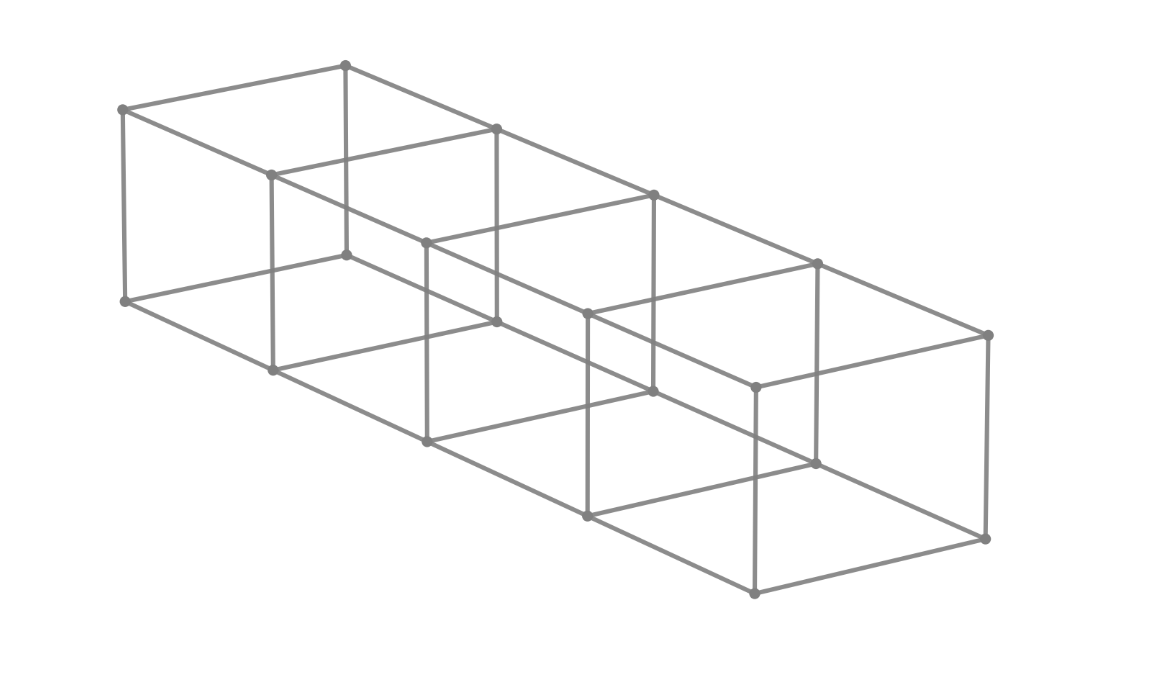}
    \caption{Cylindrical graph $G_{4,5}=C_4\times P_5$.}
    \label{fig:a}
  \end{subfigure}\hfill
  \begin{subfigure}{0.48\textwidth}
    \centering
    \includegraphics[width=\linewidth]{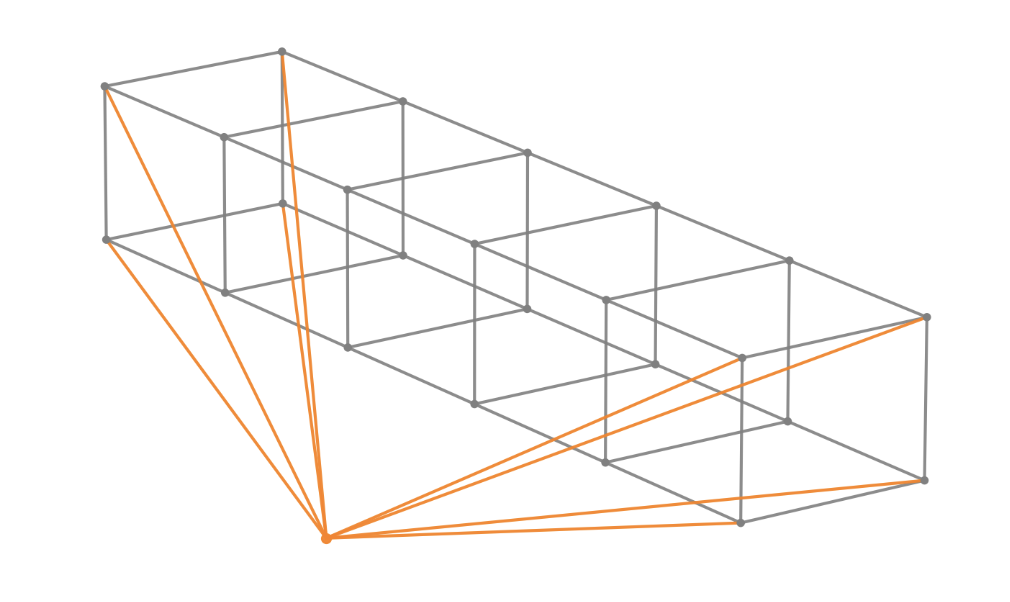}
    \caption{Cylindrical graph $G_{4,6}^{\mathrm s}$ with sink.}
    \label{fig:b}
  \end{subfigure}
  \caption{Cylindrical graphs.}
  \label{fig:ab}
\end{figure}

Let \(T\) be a spanning tree of \(G_{n,m}^{\mathrm{s}}\). For every
real vertex \(v\in V(G_{n,m})\), there is a unique path in \(T\) from
\(v\) to \(s\).  If the last real vertex of this path before \(s\) lies in \(R_0\), we
call \(v\) \emph{left-labelled}; if it lies in \(R_{m-1}\), we call
\(v\) \emph{right-labelled}.  We denote the corresponding vertex sets by
\(V_l(T)\) and \(V_r(T)\).  The left and right segments of \(T\) are the subgraphs of \(T\) spanned by the
paths from vertices of \(V_l(T)\) and \(V_r(T)\), respectively, to the sink
\(s\).

\begin{figure}[H]
        \centering
        \includegraphics[width=0.8\linewidth]{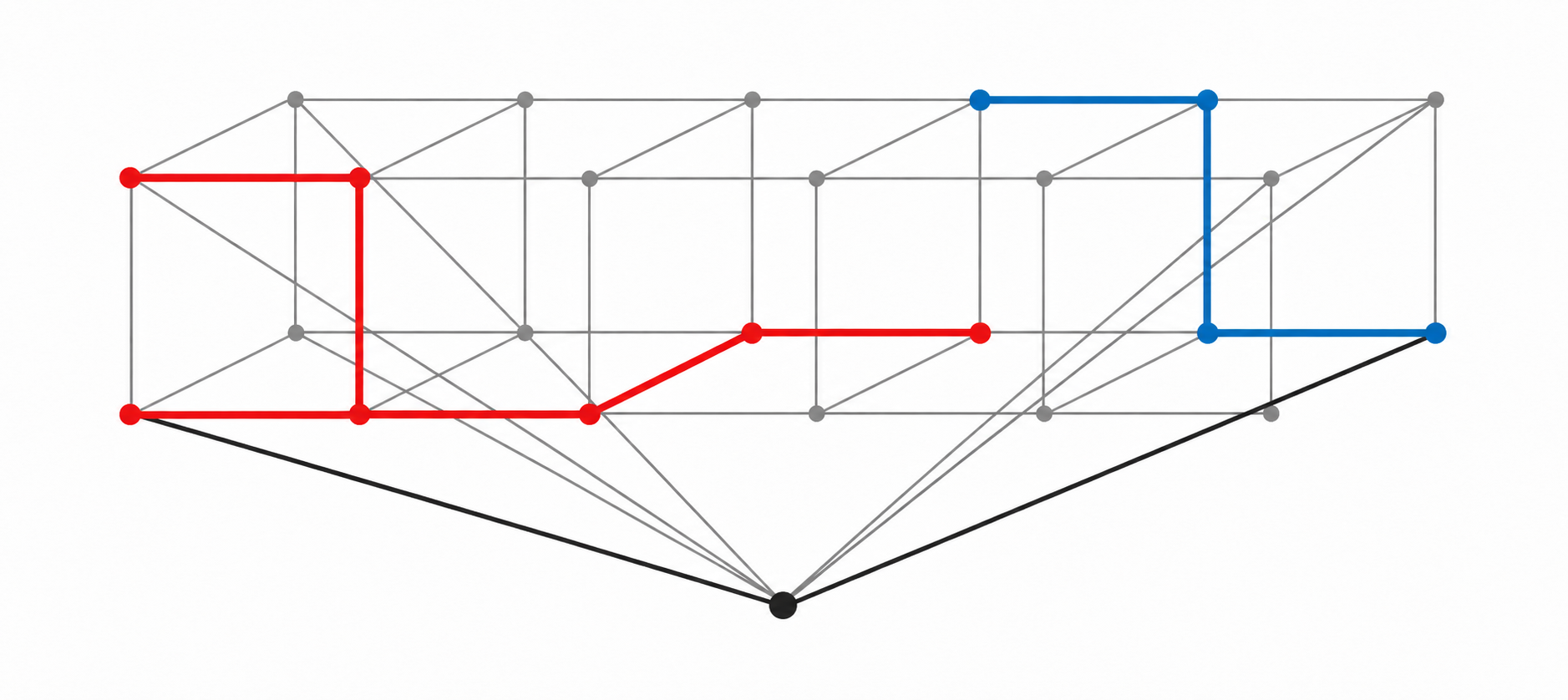}
        \caption{A tree subgraph of \(G_{4,6}^{\mathrm s}\) illustrating the
left and right segments. The sink and its incident edges are shown in
black. The left segment is highlighted in red, and the right segment
in blue.}
\label{fig:left_right}
    \end{figure}

\begin{definition}
Let \(G\) be either \(G_{n,m}\) or \(G_{n,m}^{\mathrm s}\), and let
\(T\) be a spanning tree of \(G\). A \emph{trunk} of \(T\) is a simple path \(L\subset T\)
such that
\[
L\cap R_k\neq\varnothing
\qquad\text{for every }k=0,\ldots,m-1.
\]
If \(G=G_{n,m}^{\mathrm s}\), then a \emph{sink-trunk} is a trunk
\(L\subset T\) such that \(s\in L\).
\end{definition}

\begin{definition}
Let \(L\subset T\) be a trunk. Every connected component \(B\) of
\(T\setminus V(L)\) is adjacent to a unique vertex
\(a(B)\in V(L)\). The corresponding \emph{branch off \(L\)} is the
subtree induced by
\[
V(B)\cup\{a(B)\}.
\]
Its length is
\[
\operatorname{len}(B)
:=
\max_{v\in V(B)}d_T(v,a(B)).
\]
Equivalently, the maximal branch length off \(L\) is
\[
\max_{v\in V(T)}d_T(v,L),
\qquad
d_T(v,L):=\min_{u\in V(L)}d_T(v,u).
\]
\end{definition}

\begin{definition}
A \emph{uniform spanning tree} (UST) of a finite connected graph \(G\)
is a spanning tree chosen uniformly from the set of all spanning trees
of \(G\).
\end{definition}

\begin{figure}[H]
    \centering
    \includegraphics[width=0.5\linewidth]{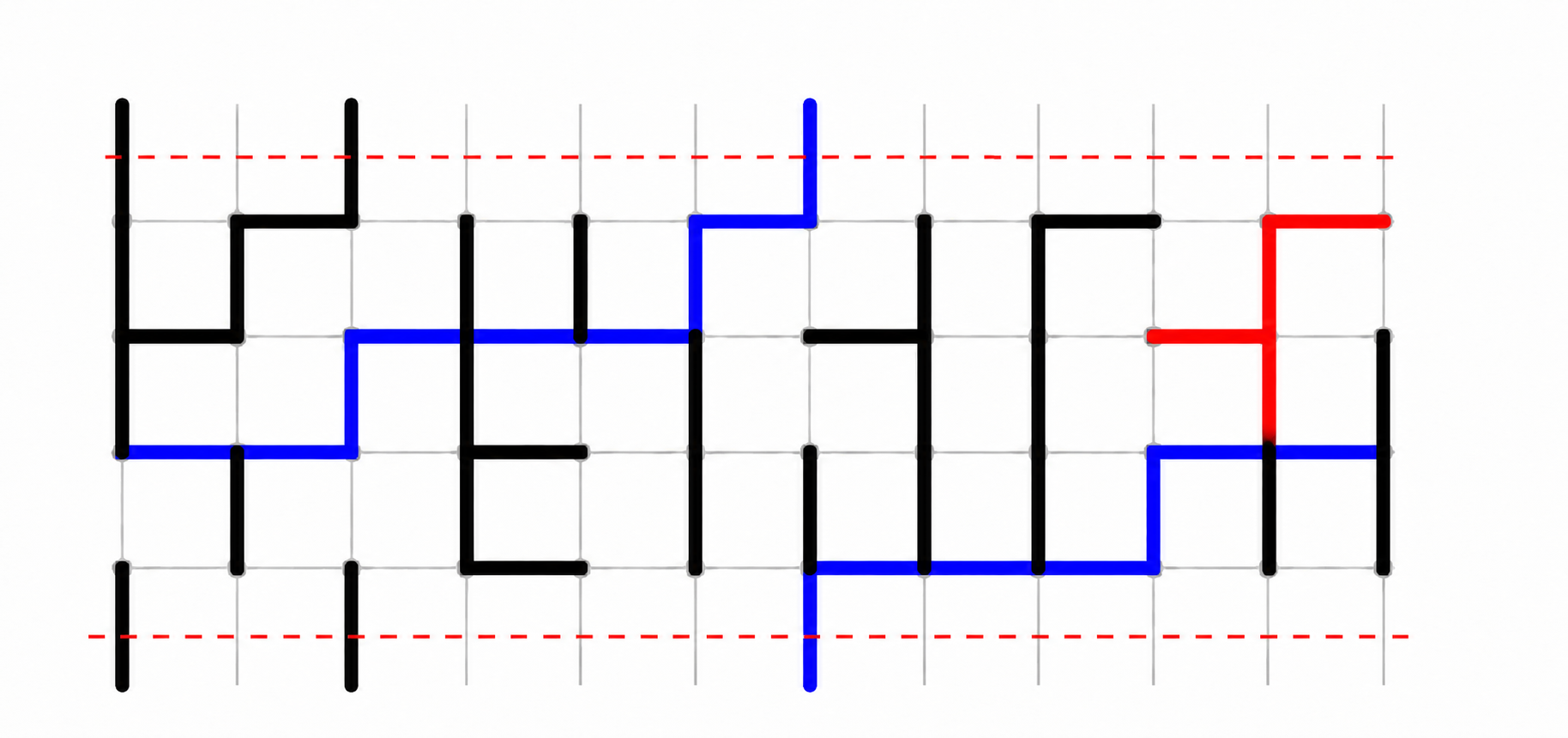}
    \caption{An unwrapped view of a spanning tree on the cylindrical graph \(G_{4,12}\).
The designated trunk is blue, the branch is red, and the remaining edges are black.
The top and bottom boundaries are identified; that is, we glue together the edges cut according to the hatching.}
    \label{fig:tree}
\end{figure}

% \begin{figure}[h]
%     \centering
%     \includegraphics[width=0.3\linewidth]{tree.png}
%     \caption{An unwrapped view of a spanning tree on the cylindrical graph \(G_{4,12}\).
% The designated trunk is blue, the branch is red, and the remaining edges are black.
% The top and bottom boundaries are identified; that is, we glue together the edges cut according to the hatching.}
%     \label{fig:tree}
% \end{figure}

\begin{theorem}[Exponential tail for branches off the Wilson trunk of a UST on a cylinder]\label{thm:main}
Let \(n\ge3\) and \(m\ge2\), and let \(G_{n,m}=C_n\times P_m\). Sample a uniform spanning tree $T$ on $G_{n,m}$ by Wilson's algorithm as follows: start with a root vertex on one boundary ring, and start the first random walk from a vertex on the other boundary ring. Let $L\subset T$ be the first loop-erased path produced by the algorithm. Then $L$ is a trunk, and for every $l\ge 0$,
\[
\mathbb{P}\!\left(\text{$T$ has a branch off $L$ of length at least } l\right)
\;\le\; C\,m\, (n-1)\,\theta^{\,l},
\]
where $C=C(n)>0$ and $\theta=\theta(n)\in(0,1)$ depend only on the circumference $n$ and are independent of $m$ and $l$.
\end{theorem}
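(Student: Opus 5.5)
The proof runs in three steps: show that $L$ is a trunk, bound the tree-distance from a single vertex to $L$ using Wilson's algorithm, and finish with a union bound over vertices.

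\textbf{Step 1: $L$ is a trunk.} Let the root be $r\in R_0$ and let the first walk start at $w\in R_{m-1}$. The walk stops on hitting $r$. Loop-erasure of a nearest-neighbour path from ring $m-1$ to ring $0$ yields a simple path between the same endpoints. Each step changes the ring index by at most one, so this path meets every $R_k$. Hence $L$ is a trunk.

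\textbf{Step 2: reduction to one vertex.} Fix a vertex $v\notin L$. Because Wilson's algorithm produces the UST whatever order the later starting points are chosen in, we may take $v$ as the next starting vertex after $L$ is built. The walk from $v$ runs until it hits $L$. Its loop-erasure is exactly the tree path from $v$ to $L$, so $d_T(v,L)$ equals the length of this loop-erased path. That length is at most the number of steps $\tau_v$ of the walk.

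\textbf{Step 3: uniform exponential tail for $\tau_v$.} This is the key estimate. Since $L$ meets the ring $R_k$ containing $v$, some vertex of $L$ lies within ring-distance $\lfloor n/2\rfloor$ of $v$. The same holds from any position $x$ of the walk, since $L$ meets the ring of $x$. Fix such a target $u_x\in L\cap R_{k(x)}$. With probability at least $p:=4^{-\lfloor n/2\rfloor}$, the next $\lfloor n/2\rfloor$ steps go around the ring directly to $u_x$. Each vertex has degree at most $4$, and ring edges are always available, so this bound does not depend on $m$ or on $L$. By the Markov property, applied in blocks of $\lfloor n/2\rfloor$ steps, we get
\[
\mathbb{P}(\tau_v\ge l\mid L)\le (1-p)^{\lfloor l/\lfloor n/2\rfloor\rfloor}\le C\theta^l,
\]
with $\theta=(1-p)^{1/\lfloor n/2\rfloor}$ and $C=(1-p)^{-1}$. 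Averaging over $L$ gives the same bound for $\mathbb{P}(d_T(v,L)\ge l)$.

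\textbf{Step 4: union bound.} A branch of length at least $l$ exists only if some $v\notin L$ has $d_T(v,L)\ge l$. The trunk contains at least one vertex per ring, so at most $m(n-1)$ vertices lie outside $L$. However, the set of such vertices is random, so we sum over all $nm$ vertices, each weighted by the indicator that it lies off $L$. Alternatively, we absorb the factor $n/(n-1)$ into $C$. Either way,
\[
\mathbb{P}(\exists\text{ branch of length}\ge l)\le \sum_v\mathbb{P}(v\notin L,\ d_T(v,L)\ge l)\le C\,m\,(n-1)\,\theta^l .
\]

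\textbf{Main obstacle.} The delicate point is the step that fixes $v$ as the next Wilson start. The reordering freedom must be used correctly: the vertex order may be chosen adaptively, or, more simply, the bound can be applied to a fixed deterministic order in which $v$ comes first after $L$. It must also be checked that the loop-erased path from $v$ is exactly the tree geodesic to $L$. This holds because the loop-erased path is a path in $T$ that meets $L$ only at its endpoint.
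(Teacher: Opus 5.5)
Your proposal is correct and follows the paper's proof essentially step for step: the trunk property from ring-by-ring crossing, the adaptive Wilson ordering that starts the next walk at $v$, the bound $|\beta|\le\tau_v$ with a geometric tail coming from $p=4^{-\lfloor n/2\rfloor}$ because $L$ meets every ring, and a union bound over the at most $(n-1)m$ vertices off $L$ (the paper simply conditions on $L$, which makes that vertex set deterministic). The only slip is an off-by-one in the block estimate: the Markov property gives $\mathbb P(\tau_v>qM\mid L)\le(1-p)^q$ with $M=\lfloor n/2\rfloor$, hence $\mathbb P(\tau_v\ge l\mid L)\le(1-p)^{\lfloor l/M\rfloor-1}$ (for $n=3$, $l=1$ your version would assert $\mathbb P(\tau_v\ge 1\mid L)\le 3/4$, whereas it equals $1$), and this is harmless once you take $C=(1-p)^{-2}$.
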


\subsection{The wired cylinder}

We next state the corresponding result for the wired cylinder.

\begin{theorem}[Existence of a sink-trunk with short branches in the wired model]
\label{thm:exist-good-trunk-wired}
Fix \(n\ge3\). There exist constants \(C=C(n)<\infty\) and
\(\theta=\theta(n)\in(0,1)\), independent of \(m\), such that for
every \(m\ge2\) and every \(\ell\ge1\), the wired uniform spanning
tree \(T\) of \(G_{n,m}^{\mathrm s}\) satisfies
\[
\begin{aligned}
&\mathbb P\left(
\exists\text{ a sink-trunk }L\subset T
\text{ such that every branch off }L\text{ has length}<\ell
\right) \\
&\qquad\ge 1-Cm^3\theta^\ell .
\end{aligned}
\]
\end{theorem}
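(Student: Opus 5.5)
We run Wilson's algorithm on the wired graph \(G_{n,m}^{\mathrm s}\) rooted at the sink \(s\), and take the first loop-erased path as the backbone of the sink-trunk. The difficulty compared with Theorem~\ref{thm:main} is that a single walk from \(R_0\) may simply be absorbed at \(s\) after a few steps, giving a path that does not cross the cylinder. So the trunk has to be assembled from pieces.

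\emph{Construction of the trunk.} Order the real vertices ring by ring, and at each stage start the next walk from a vertex \(v\) of the lowest-index ring not yet covered. The first walk from \(R_0\) immediately produces a path \(P_0\) from \(R_0\) to \(s\). We then run walks from successive rings until every ring \(R_k\) meets the current tree. The sink-trunk candidate is the union of two paths: the tree path from \(s\) entering through \(R_0\), and the tree path from \(s\) entering through \(R_{m-1}\), both extended to meet. Concretely, let \(k^*\) be the last ring whose covering path ends through \(R_0\). If some vertex \(u\) in \(R_{k^*}\) or \(R_{k^*+1}\) is right-labelled and adjacent in \(T\)'s structure to the left part, we take
\[
L = (\text{path } s\to u_{\mathrm{left}}) \cup (\text{path } s\to u_{\mathrm{right}}).
\]
This is a simple path through \(s\), since the two pieces meet only at \(s\), and it meets every ring.

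A cleaner alternative, which I would try first, is to take any left-labelled vertex \(x\) of maximal ring index and any right-labelled vertex \(y\) of minimal ring index. The union of their sink paths crosses every ring, because left-labelled vertices reach \(R_0\) and right-labelled vertices reach \(R_{m-1}\), and the rings in between are covered by the LR-interface region.

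\emph{Branch bound.} Fix a real vertex \(v\) and compute the tree via Wilson's algorithm, taking \(L\)'s constituent paths first and then \(v\). Conditionally on the already-built subtree, which meets every ring or every ring in a window of width \(O(1)\) around the interface, the walk from \(v\) hits it within geometric time. From any vertex there is a path of length \(\le n+1\) to the subtree, so with probability \(\ge (1/4)^{n+1}\) the hit happens within \(n+1\) steps. The loop erasure has length at most the walk length, hence \(\mathbb P(d_T(v,L)\ge \ell)\le C\theta^\ell\), exactly as in Theorem~\ref{thm:main}.

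\emph{Main obstacle and the union bound.} The main obstacle is that \(L\) is random and is defined using the whole tree (the choice of \(x,y\)), so we cannot build it first in Wilson's algorithm. To fix this, we do a union bound over the choices. For each pair of rings \((a,b)\) and each candidate start, we consider the event that \(x\in R_a\), \(y\in R_b\) serve as extremal labelled vertices and some branch is long. By Wilson's algorithm started from \(x\) and then \(y\), the sink paths of \(x\) and \(y\) are sampled first. Their union meets every ring except possibly rings strictly between \(a\) and \(b\). On the good event these rings are empty of other labels, forcing a bounded or exponentially unlikely gap. Then each remaining vertex \(v\) has exponential tail as above. This gives \(m^2\) choices of \((a,b)\) times \(mn\) vertices, for a bound of \(Cm^3\theta^\ell\). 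The factor \(m^3\) in the statement matches this count, which suggests this is the intended route. The delicate step is controlling the gap between \(a\) and \(b\): I would show that a gap of length \(g\) costs \(\theta^{g}\) by the same hitting estimate, and absorb it into \(\theta^\ell\).
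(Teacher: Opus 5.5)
Your architecture is the paper's. You build the sink-trunk from the sink paths \(P_x,P_y\) of a left-labelled vertex of maximal ring index and a right-labelled vertex of minimal ring index, which is Lemma~\ref{lem:sink-trunk-exists}. You take a union bound over \(O(m^3)\) triples \((x,y,v)\). You run Wilson's algorithm in the order \(x,y,v\), so the first two increments are the arms of the trunk and the third is the tree path from \(v\) to it. Finally you use the uniform increment tail of Lemma~\ref{lem:uniform-wilson-increment}. Computing each union-bound term with its own Wilson order, as you do, is legitimate. It spares you the stack coupling and Lemma~\ref{lem:stack-path-reconstruction}, which the paper uses to place all the events on one probability space.

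However, the step you flag as delicate is misdiagnosed. There is no gap. If \(x\in R_a\) and \(y\in R_b\) are the extremal vertices, a ring \(R_k\) with \(a<k<b\) would consist of vertices that are neither left- nor right-labelled. That is impossible, because every ring is nonempty and every real vertex receives a label. So \(b\le a+1\) deterministically, as you yourself assert in the ``cleaner alternative'' paragraph. The remedy you sketch is therefore unnecessary, and as written it is not an argument anyway: the hitting estimate you invoke requires the current tree to meet every ring, which is exactly what would fail inside a gap.

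What actually needs care is measurability, and the deterministic fact above resolves it. The event that \(x\) and \(y\) are extremal depends on the whole tree. So you cannot condition on it and still treat the walk from \(v\) as a fresh simple random walk stopped on hitting \(F_{x,y}\). Instead, enlarge the event before conditioning. On \(\{x,y\text{ extremal},\ d_T(v,P_x\cup P_y)\ge\ell\}\):
\begin{enumerate}
\item the arms \(P_x,P_y\) meet only at \(s\), since the labels are opposite;
\item hence the first two increments are exactly \(P_x\) and \(P_y\), and \(F_{x,y}=P_x\cup P_y\) meets every ring;
\item the third increment has length at least \(\ell\).
\end{enumerate}
The enlarged event depends only on the first three walks. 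Conditioning on the first two gives the bound \(C\theta^\ell\) by Lemma~\ref{lem:uniform-wilson-increment}. This is precisely the role of the paper's events \(A_{x,y,z}(\ell)\), which impose only the \(\mathcal H_{x,y}\)-measurable requirement that \(F_{x,y}\) is a sink-trunk.

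You should also treat the case where all real vertices carry the same label. Then the trunk is the single sink path of a vertex on the far boundary ring, and the union bound must allow one endpoint to be \(s\), as in the paper.
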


\begin{remark}
The estimate in Theorem~\ref{thm:exist-good-trunk-wired} should be interpreted
on the logarithmic scale in \(m\). Since the failure probability is bounded by \(Cm^3\theta^\ell\), the
theorem implies that for every \(A>0\), with
\[
\ell=
\left\lceil
\frac{3\log m+A}{|\log\theta|}
\right\rceil,
\]
one has
\[
\mathbb P\left(
\exists\text{ a sink-trunk }L\subset T
\text{ such that every branch off }L\text{ has length}<\ell
\right)
\ge
1-Ce^{-A}.
\]
Thus the result gives a sink-trunk whose maximal branch length is
\(O_n(\log m)\) with high probability. The constants in the exponential
increment estimate are uniform in \(m\), but the maximum over all possible
branches naturally introduces the logarithmic scale.
\end{remark}

\begin{definition}
Let \(T\) be a spanning tree of \(G_{n,m}^{\mathrm s}\), with
left- and right-labelled vertex sets \(V_l(T)\) and \(V_r(T)\) as
defined above. The \emph{LR--Slash} of \(T\) is
\[
\Slash_{\mathrm{LR}}(T)
:=
\bigl\{\{u,v\}\in E(G_{n,m}):
u\in V_l(T),\ v\in V_r(T)
\text{, or }
u\in V_r(T),\ v\in V_l(T)
\bigr\}.
\]
Its size is
\[
\bigl|\Slash_{\mathrm{LR}}(T)\bigr|
:=
\#\,\Slash_{\mathrm{LR}}(T).
\]
\end{definition}

\begin{figure}[H]
    \centering
    \includegraphics[width=0.5\linewidth]{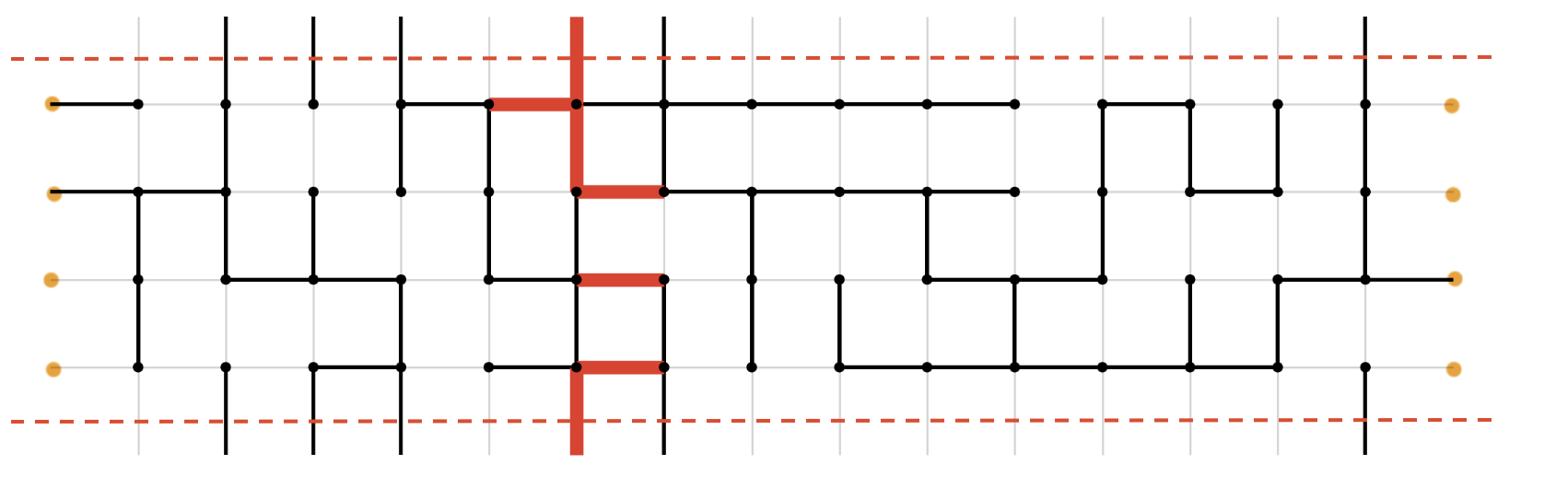}
    \caption{An unwrapped view of the wired cylindrical graph
\(G_{4,15}^{\mathrm s}\). The orange copies represent the single sink
\(s\). The red edges form the LR--Slash between the left and right
segments of the spanning tree.}
    \label{fig:slash}
\end{figure}

\begin{theorem}\label{thm:Slash}
Let \(n\ge3\) and \(m\ge2\), and let \(T\) be a uniform spanning tree on the
wired cylindrical graph \(G_{n,m}^{\mathrm s}\). There exist constants
\(C=C(n)>0\) and \(\delta=\delta(n)\in(0,1)\), depending only on the
circumference \(n\), such that for every integer \(l\ge1\),
\[
\mathbb{P}\!\left(\, \bigl|\Slash_{\mathrm{LR}}(T)\bigr|>l \,\right)\;\le\; C\,\delta^{\,l}.
\]
\end{theorem}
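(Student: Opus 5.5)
The plan is to bound the LR--Slash by the number of rings that contain both left- and right-labelled vertices, and to show that this number of ``mixed'' rings has an exponential tail. First observe that every edge of \(\Slash_{\mathrm{LR}}(T)\) lies in a ring \(R_k\) containing vertices of both labels, or is a path edge between \(R_k\) and \(R_{k+1}\) where at least one of them is mixed or the labels switch between them. Each ring carries at most \(n\) ring edges and \(n\) outgoing path edges. Hence
\[
\bigl|\Slash_{\mathrm{LR}}(T)\bigr|\le 2n\,\bigl(M+1\bigr),
\]
where \(M\) is the number of mixed rings. Here we use that any ring with all vertices left-labelled, followed by a ring with all vertices right-labelled, still contributes at most \(n\) path edges, and such a ``pure switch'' can happen only at places bounded by the mixed-ring count plus one. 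It therefore suffices to prove \(\mathbb P(M>k)\le C\delta_0^{k}\).

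For the tail of \(M\), run Wilson's algorithm on \(G^{\mathrm s}_{n,m}\) rooted at \(s\). The basic fact is a uniform one-step estimate. From any vertex of \(R_k\), simple random walk on the wired cylinder hits \(s\) through \(R_0\) before \(R_{m-1}\) with a probability that is essentially a linear harmonic function of \(k\), up to an exponentially small correction in the distance to the boundary. Conversely, the transverse position is mixed up quickly. For fixed \(n\), within \(O(n^2)\) steps the walk visits the whole ring with probability bounded below by some \(p(n)>0\).

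The key structural claim is the following. Suppose a ring \(R_k\) is mixed. Then the left and right subtrees containing \(R_k\) must both reach \(R_k\), so there is a left path and a right path passing within distance \(n\) of each other near level \(k\). Consider any ring \(R_j\) with \(j\) at distance \(d\) from the set of mixed rings. For \(R_j\) to be mixed as well, the two tree paths to the sink through opposite boundaries must come close at level \(j\) and stay disjoint. I would order the Wilson walks so that one walk is started from each ring in turn. Once the tree already contains both a left and a right crossing path at neighbouring levels, a new walk from ring \(R_j\) between them is absorbed within \(O(n)\) steps. It picks a side, and with probability at least \(p(n)\) it wraps around the ring before being absorbed, which forces the whole ring onto one label. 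This should give that, conditionally on the past, each further ring is mixed with probability at most \(1-p(n)\). It should also give that the mixed rings come in a bounded number of runs, since at most two boundary-to-boundary interfaces can be created.

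Concretely, I would prove \(\mathbb P(R_{k_1},\dots,R_{k_r}\text{ all mixed})\le (1-p)^{r-c}\) and then combine it with a count of configurations. I expect this step to be the main obstacle. A naive union bound over \(\binom{m}{r}\) sets of rings destroys uniformity in \(m\), so I must show that mixed rings form essentially one contiguous block, namely the interface region. The approach I would try first is a domain-Markov / spatial argument. Conditioning on the tree restricted to rings \(\le k\) being pure left at ring \(k\), the labels beyond \(k\) can only become mixed again by a left path reaching deep into the right region. This is an exponentially unlikely excursion by Theorem~\ref{thm:main}-type branch estimates, in the form of uniform exponential tails of Wilson increments. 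These tails are summable over \(k\) because the probability decays in the distance to the interface rather than in \(m\).
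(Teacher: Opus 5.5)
Your reduction to the number $M$ of mixed rings is correct, but the obstacle you single out is not the real one, and the step meant to handle the real one fails.

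Contiguity of the mixed rings is deterministic. Every vertex on the tree path from $v$ to $s$ carries the label of $v$, and that path meets every ring between $v$ and the boundary ring through which it exits. So a left-labelled vertex in $R_k$ forces left-labelled vertices in $R_0,\dots,R_k$, and symmetrically on the right. Hence the mixed rings are exactly $R_{i_r},\dots,R_{i_l}$ (with $i_l,i_r$ as in the proof of Lemma~\ref{lem:sink-trunk-exists}). At most one ``pure switch'' occurs, $M=(i_l-i_r+1)_+$, and no union bound over $\binom{m}{r}$ ring sets is ever needed. What has to be proved is an exponential tail for $i_l-i_r$ that is uniform in $m$.

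Your mechanism for that tail does not work. A walk that wraps around $R_j$ before absorption forces nothing. The wrap is a loop and is erased, and the labels of the other vertices of $R_j$ are decided by their own increments, or are already fixed if they lie in the current tree. In fact, once the current tree contains a left path reaching a level $\ge j$ and a right path reaching a level $\le j$, the ring $R_j$ is already irreversibly mixed. So no conditional bound of the form ``each further ring is mixed with probability at most $1-p(n)$'' can come from this. The fallback via exponential increment tails is circular as stated. Lemma~\ref{lem:uniform-wilson-increment} requires a current tree meeting every ring, and decay ``in the distance to the interface'' presupposes that the interface has already been located with controlled width, which is the claim itself.

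The missing idea is control of the \emph{overshoot}: how far the increment that first makes the left and right fronts meet penetrates past the opposite front. The bound must be one whose sum over all exploration steps does not grow with $m$. The paper gets this from an adaptive bisection, allowed by Lemma~\ref{lem:adaptive-wilson}. Each walk starts from the middle ring between the current fronts $i_l^t<i_r^t$, so the gap $D_t=i_r^t-i_l^t$ at least halves at every step. The probability that the increment attaches to the left yet reaches $q$ rings beyond $i_r^t$ is at most $(C/D_t)\rho^q$. The factor $\rho^q$ comes from the ring-barrier estimate, since each of those $q$ rings contains a right-labelled vertex. The factor $1/D_t$ comes from gambler's ruin for the walk's return across the gap.

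Because the gaps grow geometrically when read backwards from the stopping time, $\sum_t 1/D_t=O(1)$. So the overshoot $W_\tau$ has an exponential tail uniformly in $m$. Without the factor $1/D_t$, a union bound over the $\asymp\log m$ steps, or over the $\asymp m$ possible positions of the interface, loses uniformity.

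Only after this step is the localization you gesture at available. The explored tree then meets every ring, with a transition zone of width at most $W_\tau+1$. A vertex $h+d$ rings beyond this zone gets the wrong label with probability at most $\rho^{h+d}$. Hence, outside an event of probability $O_n(\rho^h)+\mathbb P(W_\tau>h)$, the LR--Slash consists of $O_n(h)$ edges.
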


\begin{remark}[Dual interpretation of the LR--Slash]
\label{rem:dual-slash}
Consider the natural embedding of \(G_{n,m}\) in the cylinder, and let
\(G_{n,m}^*\) be its dual graph. For an edge
\(e\in E(G_{n,m})\), denote the corresponding dual edge by \(e^*\), and set
\[
\Slash_{\mathrm{LR}}^*(T)
:=
\{e^*:e\in\Slash_{\mathrm{LR}}(T)\}.
\]

The dual edge set \(\Slash_{\mathrm{LR}}^*(T)\) is the interface between the
left- and right-labelled vertices. Indeed, every path in \(G_{n,m}\) from
\(V_l(T)\) to \(V_r(T)\) contains an edge of
\(\Slash_{\mathrm{LR}}(T)\), and therefore crosses its dual edge set.

In general, \(\Slash_{\mathrm{LR}}^*(T)\) need not be a single simple cycle:
it may contain several components, and some components may meet the boundary
of the cylinder. Nevertheless, every simple noncontractible dual cycle
\[
\Gamma^*\subset\Slash_{\mathrm{LR}}^*(T)
\]
winds once around the cylinder. Equivalently, \(\Gamma^*\) represents a
generator of the first homology group of the cylinder.
\end{remark}

\section{Proofs}
\subsection{Wilson's algorithm}
Let $G=(V,E)$ be a finite connected  graph. 

\begin{definition}
A \emph{simple random walk} on \(G\) is a Markov chain \((X_k)_{k\ge0}\) with transition probabilities
\[
\mathbb{P}(X_{k+1}=v\mid X_k=u)
=\begin{cases}
1/\deg_G(u), & \text{if } uv\in E,\\[3pt]
0, & \text{otherwise.}
\end{cases}
\]
\end{definition}

\begin{definition}
Let \(\gamma=(x_0,x_1,\dots,x_l)\) be a finite path in \(G\).  
The \emph{loop-erased path} \(\mathrm{LE}(\gamma)\) is obtained by iteratively erasing cycles in chronological order: whenever the path first revisits a vertex, delete the entire loop formed between the two occurrences.  
The result is a simple path with the same start and end vertices as \(\gamma\).
\end{definition}

We generate uniform spanning trees using Wilson's
algorithm~\cite{Wilson1996}. Choose a root \(r\in V\) and an ordering
of \(V\setminus\{r\}\). Starting with the tree \(\{r\}\), repeatedly
choose the first vertex in the order that is not yet in the tree, run
simple random walk from it until the walk first hits the current tree,
and add the chronological loop-erasure of the walk. The resulting
spanning tree is uniform, and its law is independent of the chosen
ordering.

\begin{lemma}[Adaptive Wilson ordering, see also Chapter~4, \cite{lyons2016probability}]
\label{lem:adaptive-wilson}
Suppose that Wilson's algorithm on a finite connected graph \(G\) has
constructed a tree \(F\). Conditional on the current tree being \(F\), the
remaining starting vertices may be chosen successively by deterministic rules
depending on the tree revealed so far. Such adaptive choices do not change the
conditional law of the final spanning tree.
\end{lemma}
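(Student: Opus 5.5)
The plan is to prove the statement through Wilson's stack (``cycle popping'') representation of the algorithm, which makes the final tree a deterministic function of an i.i.d.\ random environment that does not depend on the order in which vertices are processed. Attach to every vertex \(v\neq r\) an infinite stack \((\xi_v^{(1)},\xi_v^{(2)},\dots)\) of independent uniformly chosen neighbours of \(v\), independent over \(v\) and \(k\). A walk run from a vertex not yet in the tree is realised by always following the top arrow of the current vertex and discarding that arrow whenever the walk leaves the vertex. Erasing a loop chronologically then corresponds to popping a cycle of top arrows. First, I would recall Wilson's deterministic lemma~\cite{Wilson1996}: whatever order the cycles are popped in, the collection of popped coloured cycles is the same, and so is the final configuration of top arrows. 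In particular, either every order terminates with the same spanning tree rooted at \(r\), or no order terminates; the latter happens with probability zero.

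Next I would show that an adaptive rule is itself just a popping order. Consider any rule that chooses the next starting vertex as a deterministic function of the tree revealed so far, or indeed of everything revealed so far. Each step still runs the walk along top arrows until it hits the current tree, so each step only pops cycles of the stack configuration. Along every realisation of the stacks, the adaptive procedure is therefore one particular popping order. By the order-independence lemma it outputs the same tree as the fixed-order algorithm on the same stacks. Hence the unconditional law of the final tree is the uniform one.

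For the conditional statement, I would condition on the event that the current tree is \(F\), together with the history that produced it. That history depends only on finitely many stack entries, namely those already consumed or lying on \(F\). The key point is that, given this history, the entries not yet consumed at vertices outside \(F\) are still i.i.d.\ uniform; this follows from a stopping-time or strong Markov argument for the stack variables. The arrows at vertices of \(F\) are never read again, because every later walk stops on hitting \(F\). So the continuation is exactly Wilson's algorithm on \(G\) with the whole set \(F\) playing the role of the root, run on fresh i.i.d.\ stacks. Applying the order-independence argument of the previous paragraph to this continuation shows that any adaptive choice of the remaining starting vertices gives the same conditional law of the final tree as the original fixed ordering.

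I expect the main obstacle to be this last step: justifying rigorously that the unread portions of the stacks remain i.i.d.\ after conditioning on an adaptively generated history. I would handle it by encoding the history as a random stopping time for the sequence of arrow reads, and applying the strong Markov property of the i.i.d.\ sequence of reads. Alternatively one can simply cite the treatment in Chapter~4 of \cite{lyons2016probability}.
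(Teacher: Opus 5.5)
Your argument is correct, but it takes a different route from the paper's. The paper uses no stacks for this lemma. It works with fresh random walks, so conditional on the current tree \(F\) the continuation is independent of the past by construction. It then needs only the distributional fact that Wilson's algorithm started from a fixed initial tree has a completion law independent of the deterministic vertex order. From this it concludes that the first adaptive choice does not change the conditional law, and it iterates over the successively enlarged trees; this is in effect an induction on the number of vertices outside the current tree, which the paper leaves as ``iterating''.

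You instead couple everything on a single stack realisation. An adaptive rule is a stack-dependent popping order, so Wilson's deterministic cycle-popping lemma shows it outputs literally the same tree as any fixed order. The conditioning is then handled by the deferred-decisions property of the unread stack entries. Your route gives a stronger, pathwise statement that covers rules depending on the whole history in one step, with no induction. It is also exactly the machinery the paper needs later, in Lemma~\ref{lem:stack-path-reconstruction} and the deferred-decisions step of the proof of Theorem~\ref{thm:exist-good-trunk-wired}. The paper's route is more elementary and avoids the measurability step you identify as the main obstacle.

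One small bookkeeping fix: to invoke the cycle-popping lemma, remove an arrow only when a cycle containing it is popped, not whenever the walk leaves a vertex. Both conventions generate the same walk, since in both the \(k\)-th departure from a vertex reads its \(k\)-th stack entry. However, only in the cycle-popping convention are the arrows of the final tree the remaining top arrows, which is the configuration Wilson's lemma is about.
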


\begin{proof}
Fix a possible current tree \(F\). Conditional on \(F\), all random walks used
in the continuation of Wilson's algorithm are fresh, and the next starting
vertex selected by the adaptive rule is deterministic.

Wilson's algorithm started from the fixed initial tree \(F\) has a final law
independent of the deterministic order of the remaining vertices. Therefore
the first adaptive choice does not change the conditional law of the
completion. After the resulting loop-erased path has been added, the same
argument applies to the enlarged current tree. Iterating proves the claim.
\end{proof}

\subsection{Proof of Theorem~\ref{thm:main} }

Throughout the paper, the length \(|\beta|\) of a path \(\beta\)
means its number of edges.

\begin{lemma}[Wilson trunks and uniform increment tails]
\label{lem:uniform-wilson-increment}
Fix \(n\ge3\).

\begin{enumerate}
    \item Let Wilson's algorithm be run on \(G_{n,m}\), with the root
    on one boundary ring and the first random walk started from the
    other boundary ring. Then the first loop-erased path produced by
    the algorithm is a trunk.

    \item Let \(G\) be either \(G_{n,m}\) or
    \(G_{n,m}^{\mathrm s}\). In the wired case, assume that Wilson's
    algorithm is rooted at \(s\), so that \(s\in F\). Suppose that the
    current Wilson tree \(F\) intersects every ring
    \(R_0,\ldots,R_{m-1}\). Let \(\beta\) be the next nontrivial
    loop-erased increment. Then there exist constants
    \(C=C(n)<\infty\) and \(\theta=\theta(n)\in(0,1)\), independent
    of \(m\), \(F\), and the starting vertex, such that
    \[
    \mathbb P\bigl(|\beta|\ge\ell\mid F\bigr)
    \le
    C\theta^\ell
    \qquad(\ell\ge0).
    \]
\end{enumerate}
\end{lemma}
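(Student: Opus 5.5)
For part 1, I will use only that a loop-erasure keeps the endpoints of the walk and consists of graph edges. Take the root \(r\in R_0\), say, and start the first walk at \(v\in R_{m-1}\). The walk is stopped on hitting \(\{r\}\), so the first loop-erased path \(L=\mathrm{LE}(\gamma)\) is a simple path in \(G_{n,m}\) from \(v\) to \(r\). Every edge of \(G_{n,m}\) changes the ring index by at most one. Since the index along \(L\) goes from \(m-1\) to \(0\), it takes every value in between, so \(L\cap R_k\neq\varnothing\) for all \(k\). The path \(L\) is contained in the final tree \(T\), so it is a trunk.

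For part 2, I will bound \(|\beta|\) by the length of the random walk that produces it. The loop-erasure of a path with \(N\) steps has at most \(N\) edges, so it suffices to prove
\[
\mathbb P(\tau\ge \ell\mid F)\le C\theta^\ell,
\]
where \(\tau\) is the hitting time of \(F\) for a simple random walk started from an arbitrary vertex \(x\notin F\). Conditional on \(F\), the walk is fresh, by the description of Wilson's algorithm and Lemma~\ref{lem:adaptive-wilson}. The key step is a uniform hitting estimate. Let \(x=(i,k)\). By hypothesis, \(F\) contains some vertex \((i',k)\) in the same ring. The walk reaches it within \(\lfloor n/2\rfloor\) steps by moving along ring edges toward it.

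Each vertex has degree at most \(4\) in \(G_{n,m}\). In the wired graph, boundary vertices have degree at most \(4\) as well: \(2\) ring neighbours, \(1\) path neighbour and the sink. So each prescribed step has probability at least \(1/4\). Setting \(K=\lfloor n/2\rfloor\) (or \(K=\max(1,\lfloor n/2\rfloor)\)) and \(p=4^{-K}\), I get that from every vertex outside \(F\),
\[
\mathbb P_y(\tau\le K)\ge p .
\]
This holds uniformly in \(m\), \(F\) and \(y\), because the hypothesis applies to every ring. In the wired case the sink belongs to \(F\), and the same ring argument still works, with the sink as an extra possible target.

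By the Markov property at times \(K,2K,\dots\), I then obtain
\[
\mathbb P(\tau> jK)\le (1-p)^j .
\]
Hence
\[
\mathbb P(|\beta|\ge\ell\mid F)\le\mathbb P(\tau\ge\ell)\le (1-p)^{\lfloor \ell/K\rfloor -1}\le C\theta^\ell,
\]
with \(\theta=(1-p)^{1/K}\) and \(C=(1-p)^{-2}\). Both constants depend only on \(n\).

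The main point needing care is this uniformity. The argument relies on the hypothesis that \(F\) meets every ring, and on the fact that the current tree only grows, so the hypothesis persists for all later increments. It also requires checking that the walk's transition probabilities are bounded below on both \(G_{n,m}\) and \(G_{n,m}^{\mathrm s}\), which the degree bound above provides. The statement ``nontrivial increment'' is harmless: if the starting vertex is already in \(F\), then \(\beta\) is empty and the bound is trivial.
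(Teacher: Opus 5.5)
Your proposal is correct and follows the paper's proof essentially step for step. Part 1 rests on the fact that the ring index changes by at most one per edge. Part 2 combines the bound $|\beta|\le\tau_F$, the uniform estimate that from any real vertex the walk hits $F$ within $\lfloor n/2\rfloor$ ring steps with probability at least $4^{-\lfloor n/2\rfloor}$, and iteration via the Markov property. Your explicit constants $\theta=(1-p)^{1/K}$ and $C=(1-p)^{-2}$ simply make the paper's ``after changing constants'' concrete.
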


\begin{proof}
For the first assertion, the first loop-erased path connects the two
boundary rings. Since the longitudinal coordinate changes by at most
one along each edge, it intersects every ring
\(R_0,\ldots,R_{m-1}\), and is therefore a trunk.

For the second assertion, put
\[
M:=\left\lfloor\frac n2\right\rfloor.
\]
Since \(F\) intersects every ring, every real vertex is within graph
distance \(M\) of \(F\), by moving along its ring. All real vertices
of \(G_{n,m}\) and \(G_{n,m}^{\mathrm s}\) have degree at most \(4\).
Thus, from any real vertex, the random walk hits \(F\) within the next
\(M\) steps with probability at least
\[
p:=4^{-M},
\]
by following a fixed shortest path to \(F\).

Let \(\tau_F\) be the hitting time of \(F\). By the Markov property,
\[
\mathbb P(\tau_F>qM\mid F)
\le
(1-p)^q
\qquad(q\ge0).
\]
Since loop-erasure cannot increase path length,
\[
|\beta|\le\tau_F.
\]
The claimed estimate follows from this geometric tail after changing
constants.
\end{proof}

\begin{proof}[Proof of Theorem~\ref{thm:main}]
Condition on the first Wilson trunk \(L\), and fix
\(v\in V(G_{n,m})\setminus L\). By
Lemma~\ref{lem:adaptive-wilson}, after \(L\) has been created we may
start the next walk from \(v\), without changing the conditional law
given \(L\). The loop-erased path added at this step runs from \(v\)
to \(L\). In the final tree this path is the unique path from \(v\)
to \(L\), and therefore its length is \(d_T(v,L)\).

By Lemma~\ref{lem:uniform-wilson-increment}, uniformly in \(v\) and
\(L\),
\[
\mathbb P\bigl(d_T(v,L)\ge l\mid L\bigr)
\le
C(n)\theta(n)^l
\qquad(l\ge0).
\]

Assume first that \(l\ge1\). A branch off \(L\) has length at least
\(l\) precisely when some vertex outside \(L\) has tree-distance at
least \(l\) from \(L\). Since \(L\) intersects every ring, it contains
at least \(m\) vertices, and therefore
\[
|V(G_{n,m})\setminus L|\le nm-m=(n-1)m.
\]
Thus, conditionally on \(L\),
\[
\begin{aligned}
&\mathbb{P}\!\left(\text{\(T\) has a branch off \(L\) of length at least \(l\)}
\,\middle|\,L\right)\\
&\qquad\le
\sum_{v\in V(G_{n,m})\setminus L}
\mathbb P\bigl(d_T(v,L)\ge l\mid L\bigr)\\
&\qquad\le
(n-1)m\,C(n)\theta(n)^l .
\end{aligned}
\]
Taking expectation over \(L\) gives the desired estimate for \(l\ge1\).
The case \(l=0\) follows after increasing \(C(n)\), since the relevant
probability is at most \(1\).
\end{proof}

Initially we had a more involved proof, and we thank Yuval Peres for the simplified argument presented above.

\begin{lemma}[Existence of a sink-trunk]
\label{lem:sink-trunk-exists}
Every spanning tree of \(G_{n,m}^{\mathrm s}\) contains a sink-trunk.
\end{lemma}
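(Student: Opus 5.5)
We need to show that every spanning tree \(T\) of \(G_{n,m}^{\mathrm s}\) contains a simple path through \(s\) that meets every ring \(R_0,\dots,R_{m-1}\). The plan is to produce a vertex whose tree path to the sink starts near one end and enters the sink from the other end. We then attach the sink-edge on the near end.

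\textbf{Step 1: choose the two endpoints.} Since \(T\) is spanning, every real vertex has a unique \(T\)-path to \(s\), and hence a left or right label as defined above.
\begin{itemize}
\item Pick any \(u\in R_0\) and follow its path \(\gamma_u\) to \(s\).
\item If the last real vertex of \(\gamma_u\) lies in \(R_{m-1}\), then the real part of \(\gamma_u\) runs from \(R_0\) to \(R_{m-1}\).
\item Otherwise, pick \(w\in R_{m-1}\) and consider \(\gamma_w\). If \(w\) is left-labelled, its real part runs from \(R_{m-1}\) to \(R_0\).
\item In the remaining case every vertex of \(R_0\) tested is left-labelled and every vertex of \(R_{m-1}\) tested is right-labelled. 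This does not immediately give a crossing, so the argument must be restructured.
\end{itemize}

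\textbf{Step 2: the restructured argument.} Take \(u\in R_0\) and \(w\in R_{m-1}\), and consider the unique \(T\)-path \(P\) from \(u\) to \(w\).
\begin{itemize}
\item If \(s\notin P\), then \(P\) is a path of real vertices from \(R_0\) to \(R_{m-1}\). Since the longitudinal coordinate changes by at most one per edge, \(P\) meets every ring, as in Lemma~\ref{lem:uniform-wilson-increment}(1).
\item To make \(P\) contain \(s\), we extend it. Walk from \(w\) toward \(s\) along \(\gamma_w\); since \(s\) is the root, this path either passes through \(u\)'s side of the tree or not.
\item More simply: \(\gamma_u\) and \(\gamma_w\) both end at \(s\). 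The \(T\)-path from \(u\) to \(w\) is the symmetric union \(\gamma_u\cup\gamma_w\) minus their common part, and it passes through \(s\) exactly when the two paths meet only at \(s\).
\end{itemize}

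\textbf{Step 3: the clean case split.} Consider the path \(\gamma_u\) itself, which is a simple path from \(u\in R_0\) to \(s\).
\begin{itemize}
\item \textbf{Case A.} If \(\gamma_u\) meets \(R_{m-1}\) before reaching \(s\), then \(\gamma_u\) is a simple path containing \(s\). Its real part runs from \(R_0\) to \(R_{m-1}\), so by the one-step argument it meets every ring. Hence \(\gamma_u\) is a sink-trunk.
\item \textbf{Case B.} Otherwise, \(\gamma_u\) enters \(s\) from \(R_0\) without touching \(R_{m-1}\). Take \(w\in R_{m-1}\) and its path \(\gamma_w\).
  \begin{itemize}
  \item If \(\gamma_w\) meets \(R_0\), it is a sink-trunk by the same reasoning as Case A.
  \item Otherwise, consider the \(T\)-path from \(u\) to \(w\). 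If it passes through \(s\), it is a simple path containing \(s\) that meets \(R_0\) and \(R_{m-1}\), with real pieces attached to each. We still need every ring to be met. The real piece from \(w\) lies in rings \(\ge\) some \(k_1\), and the piece from \(u\) lies in rings \(\le\) some \(k_0\). Gaps between \(k_0\) and \(k_1\) are possible, so this subcase needs care.
  \item If the path does not pass through \(s\), its real part runs from \(R_0\) to \(R_{m-1}\). To include \(s\), append the first sink-edge from an endpoint. This remains simple provided we trim at the first meeting point of \(\gamma_u\) with the crossing path.
  \end{itemize}
\end{itemize}

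\textbf{Main obstacle: the gap subcase, and the fix.} The obstacle is exactly the subcase in Case B where the \(u\)–\(w\) path passes through \(s\) with a ring gap between its two real pieces. I expect to fix it by choosing the crossing path first and attaching \(s\) by the shortest tree route, as follows.
\begin{itemize}
\item Contract \(s\) away and look at \(T\) restricted to real vertices. This is a forest whose components each attach to \(s\).
\item If some component meets both \(R_0\) and \(R_{m-1}\), take the real crossing path \(Q\) inside it.
\item Let \(z\) be the vertex where the sink-path from \(Q\) leaves \(Q\); then \(z\) is attached to \(s\) through a path disjoint from \(Q\) except at \(z\).
\item Choose \(z\) at an endpoint, or argue via the component structure. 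The subtree of that component, rooted at its sink-edge, lets us take a path from the sink-edge vertex to one ring and then continue to the other ring.
\item Concretely, let \(x\) be the component's sink-neighbour, in \(R_0\) say. The tree path from \(x\) to any vertex of \(R_{m-1}\) in the component crosses all rings, and prepending \(s\) gives a simple path containing \(s\) that meets every ring. Hence it is a sink-trunk.
\item If no component meets both boundary rings, each component touches at most one boundary ring. Take the components \(K_0\ni u\) and \(K_1\ni w\), and join \(R_0\)-to-\(R_{m-1}\) through \(s\) by choosing, in each component, a path from its sink-neighbour to its farthest ring.
\item The rings covered by \(K_0\) and \(K_1\) may leave a gap. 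This is resolved by choosing, for each ring \(k\), a component containing a vertex of \(R_k\); the union of components covering consecutive rings shows that some \(R_0\)-component and \(R_{m-1}\)-component together cover all rings.
\end{itemize}
This last covering step is where I expect the real work, and I am least sure it goes through as stated.
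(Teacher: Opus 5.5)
The proposal has a genuine gap, at exactly the step you flag. Your final framework is the right one. Each component of \(T-s\) is joined to \(s\) by a unique tree edge. If some component meets both \(R_0\) and \(R_{m-1}\), the tree path from its sink-neighbour to a vertex of the opposite boundary ring, with \(s\) prepended, is a sink-trunk; that case is handled correctly.

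The remaining case, however, is the actual content of the lemma, and there you only assert that some \(R_0\)-component and some \(R_{m-1}\)-component together cover all rings. Your suggested justification via ``components covering consecutive rings'' points the wrong way. A simple path through \(s\) enters at most two components of \(T-s\), one on each side of \(s\), so no chaining of several components can yield a sink-trunk. Likewise, the gap subcase in your Case B cannot be repaired for the given \(u\) and \(w\): it genuinely occurs when they are chosen arbitrarily.

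The missing idea is an extremal choice of the two components. Call a component left or right according as its sink-neighbour lies in \(R_0\) or \(R_{m-1}\). In the no-crossing case both kinds occur, since vertices of \(R_0\) lie in left components and vertices of \(R_{m-1}\) in right ones.

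Let \(A\) be the largest ring index met by a left component, and \(B\) the smallest ring index met by a right component. Every ring contains a vertex, which lies in some component, so every \(k\) satisfies \(k\le A\) or \(k\ge B\).

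Take a left component \(K_L\) containing \(v\in R_A\) and a right component \(K_R\) containing \(v'\in R_B\), with sink-neighbours \(x_L\in R_0\) and \(x_R\in R_{m-1}\). The tree path \(v\to x_L\to s\to x_R\to v'\) is simple because \(K_L\cap K_R=\varnothing\). Since the longitudinal coordinate changes by at most one per edge, its two real arms meet \(R_0,\dots,R_A\) and \(R_B,\dots,R_{m-1}\), hence every ring.

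This is precisely the paper's argument in label language:
\begin{itemize}
\item the union of the left components is \(V_l(T)\);
\item \(A\) and \(B\) are \(i_l\) and \(i_r\);
\item the covering property is \(i_l+1\ge i_r\);
\item disjointness of the two arms is the observation that oppositely labelled vertices have paths to \(s\) meeting only at \(s\).
\end{itemize}
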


\begin{proof}
Label each real vertex left or right according to the last edge of its
path to \(s\).  If all real vertices are left-labelled, choose \(x\in R_{m-1}\). The
path from \(x\) to \(s\) ends through \(R_0\); hence, since the
longitudinal coordinate changes by at most one along each edge, it
intersects every ring. This path is a sink-trunk. The all-right case is
symmetric.

Otherwise both labels occur. Let
\[
i_l:=\max\{k: V_l(T)\cap R_k\neq\varnothing\},
\qquad
i_r:=\min\{k: V_r(T)\cap R_k\neq\varnothing\}.
\]
Since every ring contains a real vertex, there is no empty ring between
the left-labelled and right-labelled regions; hence \(i_l+1\ge i_r\).
Choose \(x\in V_l(T)\cap R_{i_l}\) and
\(y\in V_r(T)\cap R_{i_r}\). The paths from \(x\) and \(y\) to \(s\)
meet only at \(s\): if they met earlier, their remaining path to \(s\)
would be common, forcing the same label. Thus their union is a simple path through \(s\). The path from \(x\)
to \(s\) intersects \(R_0,\ldots,R_{i_l}\), while the path from \(y\)
to \(s\) intersects \(R_{i_r},\ldots,R_{m-1}\). Since \(i_l+1\ge i_r\),
the union intersects every ring and is therefore a sink-trunk.
\end{proof}
We also use the stack representation of Wilson's algorithm and the
cycle-popping theorem; see, for example,
\cite[Chapter~4]{lyons2016probability}.  At every
real vertex \(v\), place an infinite stack of independent uniformly
chosen neighbours of \(v\). A walk follows the currently exposed stack
arrow at its present vertex, and whenever a directed cycle is formed,
the arrows belonging to that cycle are popped. The cycle-popping theorem
states that, for a fixed stack realization, the final oriented spanning
tree rooted at \(s\) is independent of the order in which legal cycles
are popped. Equivalently, Wilson's algorithm run in any deterministic
vertex order produces the same final tree from the same stacks.
\begin{lemma}[Fixed-stack path reconstruction]
\label{lem:stack-path-reconstruction}
Fix a stack realization of Wilson's cycle-popping algorithm, and let
\(T\) be the final oriented spanning tree rooted at \(s\). Run Wilson's
algorithm with the same stacks. Suppose that, at some stage, the current
oriented tree \(F\) is a rooted oriented subtree of \(T\). If a vertex
\(x\notin F\) is started at this stage, then the loop-erased path added by
Wilson's algorithm is the unique directed path in \(T\) from \(x\) to \(F\).
\end{lemma}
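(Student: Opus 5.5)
The plan is to use the cycle-popping description directly. Fix the stacks. By the cycle-popping theorem, the final oriented tree \(T\) rooted at \(s\) consists of the arrows left exposed at the top of each stack once every cycle that is ever popped has been popped. This set of popped cycles is the same whatever order is used. The claim is then that the walk from \(x\), run with the current stacks, never gets stuck inside a cycle that is not popped in the full run. Consequently the loop-erasure it produces is the chain of final arrows from \(x\) until the chain first enters \(F\).

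The argument goes as follows.

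First, note that \(F\) is a rooted oriented subtree of \(T\). Every arrow of \(F\) is therefore a final exposed arrow. Equivalently, all cycles popped so far at vertices of \(F\) are exactly the cycles that are ever popped there, so the stacks at vertices of \(F\) are already in their final state.

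Second, apply Wilson's algorithm from the current stage in an arbitrary order that starts with \(x\). By the cycle-popping theorem (order independence), this continuation produces the same final tree \(T\) from the same stacks. When \(x\) is started, the walk follows exposed arrows and pops cycles until it hits \(F\). The stacks at the vertices of the resulting loop-erased path \(\beta\) are then frozen for the rest of the algorithm, because later walks stop upon hitting the tree and never pop arrows at tree vertices. So the arrows along \(\beta\) are final arrows of \(T\). Hence \(\beta\) is a directed path in \(T\) from \(x\) that ends at its first vertex in \(F\), and by uniqueness of directed paths in a rooted tree it is the unique directed path in \(T\) from \(x\) to \(F\).

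The main obstacle is justifying that order independence applies to a continuation started from the intermediate state \(F\), rather than from the initial stack configuration. I would handle this by observing that the algorithm so far is itself a legal sequence of cycle pops. The full run from the initial stacks can be taken to be "the steps performed so far, then starting \(x\), then the rest". Since any deterministic order produces the same \(T\), the arrows frozen when \(\beta\) is added must coincide with those of \(T\).

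A second point to check is that the walk from \(x\) hits \(F\) almost surely. This holds because \(s\in F\) and the stacks are infinite with uniform entries, so the walk terminates with probability one.
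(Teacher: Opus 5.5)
Your proposal is correct and follows the paper's argument. Like the paper, you complete the Wilson run after adding the path from \(x\), observe that arrows already in the tree are never changed afterwards, use order independence of cycle popping to identify the final tree with \(T\), and conclude by uniqueness of the directed path in \(T\) from \(x\) to \(F\). Your first step (that the stacks on \(F\) are already final) and your almost-sure termination remark are not needed: for a fixed realization that admits the final tree \(T\), the cycle-popping theorem already guarantees that every order, including yours, terminates.
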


\begin{proof}
Complete the Wilson run after the path from \(x\) has been added.
Edges already added to the current tree are never changed during the
remainder of Wilson's algorithm. Hence the path added from \(x\) is
contained in the final oriented tree produced by this run.

By the cycle-popping theorem, the final tree obtained from the fixed
stacks is independent of the Wilson order, and is therefore \(T\).
Thus the path added from \(x\) is contained in \(T\). It starts at
\(x\), ends when it first reaches \(F\), and is directed towards
\(F\). Since \(T\) contains a unique directed path from \(x\) to the
rooted subtree \(F\), the added loop-erased path must be that path.
\end{proof}

\subsection{Proof of Theorem~\ref{thm:exist-good-trunk-wired}}
Throughout the wired case, Wilson's algorithm is always rooted at the sink
\(s\). Thus the resulting spanning tree of \(G_{n,m}^{\mathrm s}\) is viewed
as an oriented tree in which every real vertex has a unique directed path to
\(s\).

\begin{proof}[Proof of Theorem~\ref{thm:exist-good-trunk-wired}]
We use Wilson's algorithm rooted at \(s\), in its stack representation.
For a fixed stack realization, the cycle-popping theorem says that the final
oriented spanning tree rooted at \(s\) is independent of the order in which
cycles are popped. Equivalently, Wilson's algorithm run in any deterministic
vertex order produces the same final tree from the same stacks.

Fix an ordered triple
\[
(x,y,z)\in (V(G_{n,m})\cup\{s\})^2\times V(G_{n,m}).
\]
Run Wilson's algorithm in the order
\[
x,\quad y,\quad z,\quad \text{then all remaining real vertices},
\]
skipping \(s\) and vertices already in the current tree. Let \(F_{x,y}\) be
the tree after the walks started from \(x\) and \(y\) have been processed. If
\(F_{x,y}\) is a sink-trunk, denote it by \(L_{x,y}\).

Let \(A_{x,y,z}(\ell)\) be the event that \(F_{x,y}\) is a sink-trunk and
that the Wilson increment added when \(z\) is processed has loop-erased
length at least \(\ell\). If \(z\in F_{x,y}\), this increment has length zero.

Let \(C_0=C_0(n)<\infty\) and
\(\theta_0=\theta_0(n)\in(0,1)\) be the constants from
Lemma~\ref{lem:uniform-wilson-increment}. Let
\(\mathcal H_{x,y}\) be the sigma-field generated by the stack entries
revealed while processing the walks started from \(x\) and \(y\), together
with the resulting tree \(F_{x,y}\).

Conditional on \(\mathcal H_{x,y}\), the unused tails of all stacks remain
independent and have their original distributions. This is the usual
deferred-decisions property for independent i.i.d.\ stacks: conditional on
the adaptively revealed finite prefixes, the unrevealed suffixes remain
mutually independent and retain their original product distribution.
Consequently, if \(z\notin F_{x,y}\), the continuation started from \(z\),
up to its first hit on \(F_{x,y}\), has the law of a simple random walk
started from \(z\) and stopped on hitting \(F_{x,y}\).

On the event that \(F_{x,y}\) is a sink-trunk,
Lemma~\ref{lem:uniform-wilson-increment} therefore gives
\[
\mathbb P\bigl(
A_{x,y,z}(\ell)
\,\bigm|\,
\mathcal H_{x,y}
\bigr)
\le
C_0\theta_0^\ell.
\]
If \(F_{x,y}\) is not a sink-trunk, then
\(A_{x,y,z}(\ell)\) does not occur. Hence
\[
\mathbb P\bigl(
A_{x,y,z}(\ell)
\,\bigm|\,
\mathcal H_{x,y}
\bigr)
\le
\mathbf 1_{\{F_{x,y}\text{ is a sink-trunk}\}}
C_0\theta_0^\ell
\le
C_0\theta_0^\ell.
\]
Taking expectations yields
\[
\mathbb P\bigl(A_{x,y,z}(\ell)\bigr)
\le C_0\theta_0^\ell
\]
uniformly in \(m,x,y,z\).

We now prove the deterministic implication behind the union bound. Fix the
stacks, and let \(T\) be the final tree. Suppose that \(T\) has no sink-trunk
whose branches all have length \(<\ell\). By
Lemma~\ref{lem:sink-trunk-exists}, choose a sink-trunk \(L\subset T\). Since \(L\) does not have all its branches of length \(<\ell\), there
is a real vertex \(z\) such that
\[
d_T(z,L)\ge \ell .
\]

Let \(x,y\) be the two endpoints of \(L\), allowing one endpoint to be \(s\).
Run Wilson's algorithm from the same stacks in the order
\[
x,\quad y,\quad z,\quad \text{then all remaining real vertices}.
\]
If neither endpoint is \(s\), then \(s\) lies in the interior of \(L\), and
\(L\) is the union of the two directed arms from \(x\) and \(y\) to \(s\).
These arms meet only at \(s\), since \(L\) is a simple path.

By Lemma~\ref{lem:stack-path-reconstruction}, the walks started from the real
endpoint(s) of \(L\) reconstruct the directed arm(s) of \(L\) in \(T\). Thus the tree
obtained after processing \(x\) and \(y\), with \(s\) skipped if it is one of
the endpoints, is exactly \(L\), so \(L_{x,y}=L\).

Again by Lemma~\ref{lem:stack-path-reconstruction}, the path then added from
\(z\) is the unique path in \(T\) from \(z\) to \(L\). Its length is
\(d_T(z,L)\ge\ell\), so \(A_{x,y,z}(\ell)\) occurs.

Thus, for each stack realization,
\[
\left\{
\text{no sink-trunk in }T\text{ has all branches of length}<\ell
\right\}
\subset
\bigcup_{x,y,z} A_{x,y,z}(\ell),
\]
where \(x,y\in V(G_{n,m})\cup\{s\}\) and \(z\in V(G_{n,m})\).

There are at most \((nm+1)^2nm\le C_1(n)m^3\) such triples. Hence, by the
union bound and the estimate above,
\[
\mathbb P\left(
\text{no sink-trunk has all branches of length}<\ell
\right)
\le
C_1(n)m^3 C_0(n)\theta_0(n)^\ell .
\]
Absorbing \(C_1(n)C_0(n)\) into a new constant \(C(n)\) and setting
\(\theta(n)=\theta_0(n)\), we obtain
\[
\mathbb P\left(
\text{no sink-trunk has all branches of length}<\ell
\right)
\le
C(n)m^3\theta(n)^\ell .
\]
This is the desired existence estimate.
\end{proof}

\subsection{Proof of the LR--Slash estimate}

{\bf Main idea of the proof.} The proof below identifies a narrow random transition zone between the
left- and right-labelled parts of the Wilson tree. Using an adaptive
Wilson ordering, we start successive walks from middle rings between
the current left and right fronts; the gap between the fronts then
decreases geometrically, while the probability that the stopping
increment overshoots the opposite front by \(q\) rings decays
exponentially in \(q\). After the fronts meet, the revealed tree
intersects every ring, and the transition zone has exponentially
controlled width. A vertex lying \(d\) rings beyond an
\(h\)-neighbourhood of this zone can receive the wrong label only if
its random walk crosses \(h+d\) successive rings, each containing a
vertex of the correct-labelled part, without hitting that part; this
probability is exponentially small in \(h+d\). The transition zone has exponentially controlled width, and summing
the wrong-label estimate over the \(O(n)\) vertices on each ring shows
that the probability of an LR--Slash edge lying more than \(h\) rings
from this zone is \(O(\rho^h)\). Hence, except on an event of
exponentially small probability in \(h\), the entire LR--Slash is
contained in an \(O_n(h)\)-edge neighbourhood. Combining this
localization with the exponential overshoot bound and taking \(h\)
proportional to the desired slash-size threshold yields the
exponential tail for
\(
\bigl|\Slash_{\mathrm{LR}}(T)\bigr|.
\)

\begin{proof}[Proof of Theorem~\ref{thm:Slash}]
Throughout the proof, \(n\) is fixed and all constants may depend on
\(n\), but not on \(m\). Since \(G_{n,2}^{\mathrm s}\) has only
\(O_n(1)\) edges, the case \(m=2\) is absorbed into the
multiplicative constant. Hence we assume \(m\ge3\).

We run Wilson's algorithm on \(G_{n,m}^{\mathrm s}\), rooted at its
single sink \(s\), using the adaptive ordering permitted by
Lemma~\ref{lem:adaptive-wilson}. Let
\[
\pi(i,j):=j
\]
be the longitudinal coordinate.

For the current Wilson tree \(F^t\), let \(F_l^t\) and \(F_r^t\) be
its left- and right-labelled real vertices ($s$ is not labelled). An increment inherits the
label of the real vertex at which it attaches; if it hits \(s\), its
label is determined by whether its last real vertex lies in \(R_0\)
or \(R_{m-1}\).

For a random walk \(X\), let \(\tau_l^t\) be the first time that \(X\)
hits \(F_l^t\) or enters \(s\) from \(R_0\), and let \(\tau_r^t\) be
defined symmetrically. The increment attaches to the left or right
part according as
\[
\tau_l^t<\tau_r^t
\qquad\text{or}\qquad
\tau_r^t<\tau_l^t.
\]

Define
\[
i_l^t
:=
\max\{k:F_l^t\cap R_k\neq\varnothing\},
\qquad
i_r^t
:=
\min\{k:F_r^t\cap R_k\neq\varnothing\},
\]
with the conventions
\[
\max\varnothing=-1,
\qquad
\min\varnothing=m.
\]
Thus initially
\[
i_l^0=-1,
\qquad
i_r^0=m.
\]

We use the front property
\[
F_l^t\cap R_k\neq\varnothing
\qquad(0\le k\le i_l^t),
\]
whenever \(i_l^t\ge0\), and
\[
F_r^t\cap R_k\neq\varnothing
\qquad(i_r^t\le k\le m-1),
\]
whenever \(i_r^t\le m-1\). This follows by induction: any increment
that extends one of the fronts joins its new extreme ring to the old
part of the same label, or to the corresponding boundary entrance.
Since the longitudinal coordinate changes by at most one along an
edge, the increment visits every intermediate ring.

We shall repeatedly use the following elementary ring-barrier
estimate. There exists \(\rho=\rho(n)\in(0,1)\) such that, if each of
\(q\) consecutive rings contains a vertex of a set \(H\), then the
probability that a random walk crosses all \(q\) rings in either
longitudinal direction without hitting \(H\) is at most
\[
\rho^q.
\]
Indeed, on first entering any one of these rings, the walk has
probability at least
\[
p_0:=4^{-\lfloor n/2\rfloor}>0
\]
to follow a shortest path along the ring to \(H\) before making
another longitudinal move. Successive applications of the strong
Markov property give the claim with \(\rho=1-p_0\).

An edge belongs to \(\Slash_{\mathrm{LR}}(T)\) exactly when its
endpoints have opposite labels.

We first use an adaptive exploration to locate the transition zone.
While
\[
i_l^t+1<i_r^t,
\]
put
\[
D_t:=i_r^t-i_l^t,
\qquad
c_t:=
\left\lfloor
\frac{i_l^t+i_r^t}{2}
\right\rfloor,
\]
and start the next walk from any vertex \(x_t\in R_{c_t}\).

Since \(c_t\) lies strictly between the fronts, \(R_{c_t}\) contains
no vertex of the current tree. Let \(\beta_t\) be the resulting
loop-erased increment. Stop at the first time \(\tau\) for which
\[
i_l^{\tau+1}+1\ge i_r^{\tau+1}.
\]

This stopping time is finite. Indeed, at every non-stopping step, an
increment attaching to the left part raises the left front to at least
\(c_t\), while an increment attaching to the right part lowers the
right front to at most \(c_t\). Hence
\[
D_{t+1}
\le
\left\lceil\frac{D_t}{2}\right\rceil.
\]
A non-stopping step is possible only when \(D_t\ge3\), and then
\[
\left\lceil\frac{D_t}{2}\right\rceil<D_t.
\]
Thus the integer gap strictly decreases until the exploration stops,
so \(\tau<\infty\).

For a path \(\beta\), define
\[
r(\beta):=
\max\{\pi(v):v\in V(\beta)\cap V(G_{n,m})\},
\]
and
\[
l(\beta):=
\min\{\pi(v):v\in V(\beta)\cap V(G_{n,m})\}.
\]
The overshoot of the stopping increment is
\[
W_\tau
:=
\begin{cases}
\bigl(r(\beta_\tau)-i_r^\tau\bigr)_+,
&\text{if \(\beta_\tau\) attaches to the left part},\\[2mm]
\bigl(i_l^\tau-l(\beta_\tau)\bigr)_+,
&\text{if \(\beta_\tau\) attaches to the right part}.
\end{cases}
\]

\begin{lemma}[One-step overshoot estimate]
Assume that the exploration has not stopped before step \(t\). For an
integer \(q\ge1\), define
\[
E_t^+(q)
:=
\left\{
\beta_t\text{ attaches to the left part and }
r(\beta_t)\ge i_r^t+q
\right\},
\]
and
\[
E_t^-(q)
:=
\left\{
\beta_t\text{ attaches to the right part and }
l(\beta_t)\le i_l^t-q
\right\}.
\]
There is a constant \(C_1=C_1(n)>0\) such that
\[
\mathbb P\bigl(E_t^\pm(q)\mid F^t\bigr)
\le
\frac{C_1}{D_t}\rho^q,
\]
where \(\rho=\rho(n)\in(0,1)\) is the constant in the ring-barrier
estimate.
\end{lemma}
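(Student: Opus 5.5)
The plan is to prove the bound for \(E_t^+(q)\) only; the case of \(E_t^-(q)\) follows by the reflection \(j\mapsto m-1-j\). Write \(i_l=i_l^t\), \(i_r=i_r^t\) and \(D=D_t\). If \(i_r=m\) then \(r(\beta_t)\ge m+q\) is impossible, so we may assume the right front exists. By the front property, \(F_r^t\) then meets every ring \(R_k\) with \(i_r\le k\le m-1\). Let \(X\) be the walk started at \(x_t\in R_{c_t}\). Here \(i_l<c_t<i_r\). Since loop-erasure only removes vertices, \(V(\beta_t)\subset\{X_0,\dots,X_{\tau}\}\), where \(\tau=\tau_l^t\wedge\tau_r^t\). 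So \(E_t^+(q)\) implies two things. First, \(X\) visits \(R_{i_r+q}\) before time \(\tau\). Second, after that visit it reaches the left part, i.e.\ hits \(F_l^t\) or enters \(s\) from \(R_0\), before hitting \(F_r^t\). All left vertices lie in rings \(\le i_l\), and entering \(s\) from \(R_0\) can be viewed as reaching level \(-1\le i_l\). Hence the second requirement forces the longitudinal coordinate to come down to level \(\le i_l\) before \(X\) hits \(F_r^t\).

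The proof splits at two stopping times. Let \(\sigma_1\) be the first hitting time of \(R_{i_r+q}\). Let \(\sigma_2\) be the first time after \(\sigma_1\) at which \(X\) is in \(R_{i_r-1}\). Between \(c_t\) and \(\sigma_1\) the walk crosses the \(q\) consecutive rings \(R_{i_r},\dots,R_{i_r+q-1}\) upward, and each of these rings contains a vertex of \(F_r^t\). The ring-barrier estimate therefore gives \(\mathbb P(\sigma_1<\tau_r^t\mid F^t)\le\rho^q\). For the remaining factor I use the strong Markov property at \(\sigma_2\); between \(\sigma_1\) and \(\sigma_2\) I simply bound the probability by \(1\). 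This reduces the lemma to the following claim, uniform over starting points \(y\in R_{i_r-1}\):
\[
a:=\sup_{y\in R_{i_r-1}}\mathbb P_y\bigl(X\text{ reaches level }\le i_l\text{ before hitting }F_r^t\bigr)\le\frac{1}{p_0D}.
\]
With this claim in hand, \(C_1:=1/p_0\) works.

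The claim is the heart of the matter, and the factor \(1/D\) comes from gambler's ruin. Let \(b\) be the analogous supremum over starting points in \(R_{i_r}\). From \(y\in R_{i_r-1}\), the walk either reaches level \(\le i_l\) before ring \(R_{i_r}\), or it enters \(R_{i_r}\) first. The longitudinal coordinate \(\pi(X)\) is a martingale on the real vertices with increments in \(\{-1,0,1\}\). At an interior vertex the up and down steps each have probability \(1/4\). At a vertex of \(R_0\), the step into \(s\) has the same probability \(1/4\) as the step up, so it acts as a down-step to level \(-1\). Optional stopping for this martingale, run between levels \(i_l\) and \(i_r\) and started at \(i_r-1\), bounds the first alternative by \(1/(i_r-i_l)=1/D\). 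Hence \(a\le 1/D+b\). From a vertex of \(R_{i_r}\), with probability at least \(p_0\) the walk follows a shortest ring path to \(F_r^t\) and is killed. Otherwise, to reach level \(\le i_l\) it must first enter \(R_{i_r-1}\); the walk may wander upward in between, but that only delays the entry. By the strong Markov property, \(b\le(1-p_0)a\). Combining the two inequalities gives \(a\le 1/D+(1-p_0)a\), that is, \(a\le 1/(p_0D)\).

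I expect the main point of care to be the martingale and boundary bookkeeping in this last step. One must check that the sink transitions from \(R_0\), and the convention \(i_l=-1\), preserve the martingale property. One must also check that reaching level \(\le i_l\) is truly necessary for attaching to the left part. The rest is a routine application of the ring-barrier estimate and the strong Markov property.
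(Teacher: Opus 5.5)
Your proposal is correct and follows the paper's proof: the ring-barrier estimate gives the factor $\rho^q$, and the factor $C_1/D_t$ comes from gambler's ruin across the gap, combined with the probability at least $p_0$ of being absorbed into the right part on each visit to $R_{i_r^t}$. Your renewal inequality $a\le 1/D_t+(1-p_0)a$ is a compact form of the paper's bound $\mathbb E N\le p^{-1}$ on the number of longitudinal departures from $R_{i_r^t}$, and treating entry into $s$ from $R_0$ as level $-1$ gives $1/D_t$ directly, where the paper uses $1/i_r^t\le 2/D_t$.
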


\begin{proof}
We prove the estimate for \(E_t^+(q)\); the other case is symmetric.
If \(i_r^t+q>m-1\), then \(E_t^+(q)\) is empty. Assume therefore that
\[
i_r^t+q\le m-1.
\]

Let \(X\) be the random walk from \(x_t\). On \(E_t^+(q)\), the walk
reaches \(R_{i_r^t+q}\) before hitting the right part and subsequently
hits the left part before the right part. Hence
\[
\begin{aligned}
\mathbb P\bigl(E_t^+(q)\mid F^t\bigr)
&\le
\mathbb P_{x_t}\bigl(
\tau_{R_{i_r^t+q}}<\tau_r^t
\bigr)\\
&\quad\times
\sup_{z\in R_{i_r^t+q}}
\mathbb P_z\bigl(
\tau_l^t<\tau_r^t
\bigr).
\end{aligned}
\]

By the front property, each of the \(q\) rings
\[
R_{i_r^t},R_{i_r^t+1},\ldots,R_{i_r^t+q-1}
\]
contains a right-labelled vertex. The ring-barrier estimate therefore
gives
\[
\mathbb P_{x_t}\bigl(
\tau_{R_{i_r^t+q}}<\tau_r^t
\bigr)
\le
\rho^q.
\]

It remains to estimate the second factor. A walk started to the right
of \(R_{i_r^t}\) must reach \(R_{i_r^t}\) before it can hit the left
part. Choose a right-labelled vertex in \(R_{i_r^t}\). At every visit
to this ring, the walk has probability at least \(p=p(n)>0\) to hit
the right-labelled part before its next longitudinal departure.
Hence the expected number \(N\) of longitudinal departures from
\(R_{i_r^t}\) before absorption is at most \(p^{-1}\).

A departure to the right cannot reach the left part before returning
to \(R_{i_r^t}\) or hitting the right part. After a departure to the
left, the longitudinal coordinate, observed only when it changes, is
a simple symmetric walk started from \(i_r^t-1\). If \(i_l^t\ge0\),
gambler's ruin gives
\[
\mathbb P_{i_r^t-1}\bigl(
\tau_{i_l^t}<\tau_{i_r^t}
\bigr)
=
\frac1{D_t}.
\]
If \(i_l^t=-1\), reaching the left sink entrance requires first
reaching \(R_0\), and
\[
\mathbb P_{i_r^t-1}\bigl(
\tau_0<\tau_{i_r^t}
\bigr)
=
\frac1{i_r^t}
\le
\frac2{D_t}.
\]
Consequently,
\[
\sup_{w\in R_{i_r^t}}
\mathbb P_w\bigl(
\tau_l^t<\tau_r^t
\bigr)
\le
\frac{2\mathbb E N}{D_t}
\le
\frac{C(n)}{D_t}.
\]

Combining the two estimates proves
\[
\mathbb P\bigl(E_t^+(q)\mid F^t\bigr)
\le
\frac{C_1(n)}{D_t}\rho^q.
\]
\end{proof}

\begin{lemma}[Overshoot at the end of the first phase]
There exist constants \(C_2=C_2(n)>0\) and \(\rho_2=\rho_2(n)\in(0,1)\),
independent of \(m\), such that for every integer \(q\ge1\),
\[
\mathbb P(W_\tau\ge q)\le C_2\rho_2^q.
\]
\end{lemma}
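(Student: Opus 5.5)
We bound \(\{W_\tau\ge q\}\) by the union over steps of the one-step overshoot events and sum the one-step bounds. The key observation is that the gaps \(D_t\) decrease at least geometrically, so \(\sum_t 1/D_t\) is bounded by a constant independent of \(m\).

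First, at the stopping step \(\tau\), the event \(W_\tau\ge q\) with \(q\ge1\) means one of two things. Either \(\beta_\tau\) attaches to the left part and \(r(\beta_\tau)\ge i_r^\tau+q\), or it attaches to the right part and \(l(\beta_\tau)\le i_l^\tau-q\). In the notation of the one-step overshoot lemma, this is \(E_\tau^+(q)\cup E_\tau^-(q)\). Hence
\[
\{W_\tau\ge q\}\subset\bigcup_{t\ge0}\Bigl(\{t\le\tau\}\cap\bigl(E_t^+(q)\cup E_t^-(q)\bigr)\Bigr).
\]
The event \(\{t\le\tau\}\) means the exploration has not stopped before step \(t\). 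It is measurable with respect to \(F^t\), because it is determined by the fronts at time \(t\).

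Second, we condition on \(F^t\) and apply the one-step lemma. This gives
\[
\mathbb P(W_\tau\ge q)\le \sum_{t\ge0}\mathbb E\Bigl[\mathbf 1_{\{t\le\tau\}}\frac{2C_1}{D_t}\Bigr]\rho^q
=2C_1\rho^q\,\mathbb E\Bigl[\sum_{t\le\tau}\frac1{D_t}\Bigr].
\]
It remains to bound \(\sum_{t\le\tau}D_t^{-1}\) deterministically. From \(D_{t+1}\le\lceil D_t/2\rceil\) we get \(D_{t+1}\le (D_t+1)/2\), so \(D_t-1\le (D_0-1)2^{-t}\). Reading this backwards is more useful. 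Every non-stopping step has \(D_t\ge3\), and the sequence is strictly decreasing with \(D_t\ge 2D_{t+1}-1\). So listing the gaps in reverse order from \(\tau\), the values satisfy \(D_{\tau-k}\ge 2^k+1\), by induction starting from \(D_\tau\ge 2\). Therefore
\[
\sum_{t\le\tau}\frac1{D_t}\le\sum_{k\ge0}\frac1{2^k+1}\le 2,
\]
uniformly in \(m\). If the stopping step itself has \(D_\tau=2\), the bound is still fine: the stopping step \(t=\tau\) is a step at which the exploration has not stopped before, so the one-step lemma applies to it.

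This yields \(\mathbb P(W_\tau\ge q)\le 4C_1\rho^q\), so we may take \(C_2=4C_1\) and \(\rho_2=\rho\). The main point needing care is that the overshoot events are stated for a single step under conditioning on \(F^t\), while \(\tau\) is random. The union over \(t\), together with the \(F^t\)-measurability of \(\{t\le\tau\}\), handles this.

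The only real content is that the \(1/D_t\) factor in the one-step lemma makes the sum converge. Without it, a union over up to \(\log_2 m\) steps would give a bound growing in \(m\). So the step that needs checking is the reverse-induction bound \(D_{\tau-k}\ge 2^k+1\), which follows from \(D_{t}\ge 2D_{t+1}-1\) and \(D_\tau\ge2\).
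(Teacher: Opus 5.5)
Your proposal is correct and follows the paper's proof essentially step for step. It uses the same union bound over \(t\le\tau\) of the one-step events \(E_t^\pm(q)\) conditioned on \(F^t\), and the same backward induction \(D_{\tau-j}\ge 2^j+1\) (from \(D_\tau\ge 2\) and \(D_{t+1}\le\lceil D_t/2\rceil\)) to bound \(\sum_{t\le\tau}1/D_t\) uniformly in \(m\), which yields \(\rho_2=\rho\) exactly as in the paper.
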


\begin{proof}
If \(W_\tau\ge q\), then either \(E_\tau^+(q)\) or
\(E_\tau^-(q)\) occurs. Hence
\[
\begin{aligned}
\mathbb P(W_\tau\ge q)
&\le
\sum_{t\ge0}
\mathbb E\left[
\mathbf 1_{\{t\le\tau\}}
\mathbb P\bigl(
E_t^+(q)\cup E_t^-(q)\mid F^t
\bigr)
\right]\\
&\le
C(n)\rho^q
\mathbb E\left[
\sum_{t=0}^{\tau}\frac1{D_t}
\right].
\end{aligned}
\]

At every non-stopping step,
\[
D_{t+1}
\le
\left\lceil\frac{D_t}{2}\right\rceil.
\]
Since \(D_\tau\ge2\), reading the sequence backwards gives
\[
D_{\tau-j}\ge2^j+1
\qquad(0\le j\le\tau).
\]
Therefore
\[
\sum_{t=0}^{\tau}\frac1{D_t}
\le
\sum_{j=0}^{\infty}\frac1{2^j+1}
<\infty.
\]
It follows that
\[
\mathbb P(W_\tau\ge q)
\le
C_2(n)\rho^q.
\]
Thus the lemma holds with \(\rho_2:=\rho\).
\end{proof}

At the end of the adaptive exploration, put
\[
F^*:=F^{\tau+1},
\qquad
i_l^*:=i_l^{\tau+1},
\qquad
i_r^*:=i_r^{\tau+1}.
\]
By the stopping rule,
\[
i_l^*+1\ge i_r^*.
\]
Together with the front property, this implies that \(F^*\) intersects
every real ring.

Let
\[
a:=\min(i_l^*,i_r^*),
\qquad
b:=\max(i_l^*,i_r^*).
\]
Then
\[
b-a\le W_\tau+1.
\]
Indeed, suppose that the stopping increment attaches to the left part.
Then
\[
i_r^*=i_r^\tau,
\qquad
i_l^*=r(\beta_\tau).
\]
If \(i_l^*\le i_r^*\), the stopping rule gives
\(i_r^*-i_l^*\le1\); if \(i_l^*>i_r^*\), then
\[
i_l^*-i_r^*
=
r(\beta_\tau)-i_r^\tau
=
W_\tau.
\]
The case of attachment to the right part is symmetric.

By the definitions of the two fronts, \(F^*\) contains no
left-labelled vertex to the right of \(b\), and no right-labelled
vertex to the left of \(a\).

For an integer \(h\ge1\), define
\[
V_h
:=
\bigcup_{\substack{0\le k\le m-1\\
\operatorname{dist}(k,[a,b])\le h}}
R_k,
\]
where \(\operatorname{dist}\) denotes the usual distance in
\(\mathbb R\). Let
\[
\mathcal E_h
:=
\{uv\in E(G_{n,m}):u\in V_h\text{ or }v\in V_h\}.
\]
The set \(V_h\) contains at most
\[
b-a+2h+1
\le
W_\tau+2h+2
\]
rings. Hence
\[
|\mathcal E_h|
\le
C_E(n)(W_\tau+2h+2).
\]
In particular, on \(\{W_\tau\le h\}\),
\[
|\mathcal E_h|
\le
C_E'(n)h.
\]

\begin{lemma}[Localization of the final slash]
\label{lem:slash-localization}
There exist constants \(C_4=C_4(n)>0\) and
\(\theta_4=\theta_4(n)\in(0,1)\), independent of \(m\), such that
for every integer \(h\ge1\),
\[
\mathbb P\left(
\Slash_{\mathrm{LR}}(T)\not\subset\mathcal E_h
\,\middle|\,
F^*
\right)
\le
C_4\theta_4^h.
\]
\end{lemma}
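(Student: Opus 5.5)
Conditionally on \(F^*\), we complete Wilson's algorithm from the fixed current tree \(F^*\); by Lemma~\ref{lem:adaptive-wilson} we may process the remaining real vertices one at a time in any deterministic order. The plan is to show that any slash edge outside \(\mathcal E_h\) forces some vertex far from \([a,b]\) to receive the "wrong" label, and that each such event is exponentially unlikely by the ring-barrier estimate. Call a vertex \(v\in R_k\) \emph{wrongly labelled} if \(k>b\) and \(v\in V_l(T)\), or \(k<a\) and \(v\in V_r(T)\).

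First a deterministic reduction. Suppose \(uv\in\Slash_{\mathrm{LR}}(T)\setminus\mathcal E_h\). Then neither endpoint lies in \(V_h\), and since an edge changes the longitudinal coordinate by at most one, both endpoints lie on the same side of \([a,b]\). Say both have \(\pi\)-coordinate \(>b+h\). The endpoints have opposite labels, so one of them is left-labelled and hence wrongly labelled, at distance \(d:=\pi(\cdot)-b\ge h+1\) rings from \(b\). The symmetric statement holds for \(\pi<a-h\). Thus
\[
\{\Slash_{\mathrm{LR}}(T)\not\subset\mathcal E_h\}\subset\bigcup_{d\ge h+1}\ \bigcup_{w\in R_{b+d}\cup R_{a-d}}\{w\text{ wrongly labelled}\}.
\]

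Next, fix \(w\in R_{b+d}\) with \(w\notin F^*\). Start the next Wilson walk from \(w\), immediately after \(F^*\) (Lemma~\ref{lem:adaptive-wilson}). By the path structure of the final tree, exactly as in the proof of Theorem~\ref{thm:main}, the label of \(w\) in \(T\) equals the label of the increment from \(w\), that is, the label of the part of \(F^*\) it attaches to. By the definitions of the fronts, \(F^*\) has no left-labelled vertex right of \(b\). So for the increment to attach to the left part, the walk must travel from \(R_{b+d}\) down to \(R_b\) without hitting the right-labelled part.

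The key point is that every ring \(R_k\) with \(b<k\le m-1\) contains a right-labelled vertex of \(F^*\). This holds because \(F^*\) meets every ring, and a vertex of \(F^*\) in such a ring cannot be left-labelled. The walk therefore must cross the \(d-1\) consecutive rings \(R_{b+1},\dots,R_{b+d-1}\) without hitting \(H=F^*_r\), and it must also avoid the right vertex in \(R_{b+d}\) itself. The ring-barrier estimate gives probability at most \(\rho^{d-1}\). If \(w\in F^*\), it is already correctly labelled. The left side is symmetric: every ring \(R_k\) with \(k<a\) contains a left-labelled vertex of \(F^*\).

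The union bound then gives
\[
\mathbb P\left(\Slash_{\mathrm{LR}}(T)\not\subset\mathcal E_h\,\middle|\,F^*\right)\le \sum_{d\ge h+1}2n\rho^{d-1}\le \frac{2n}{1-\rho}\rho^{h},
\]
so the lemma holds with \(\theta_4=\rho\) and \(C_4=2n/(1-\rho)\).

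The main obstacle is the step identifying the label of \(w\) in the final tree with the label of its own increment. This must be justified carefully, since later vertices in the ordering do not change labels of vertices already in the tree. The other delicate point is the exact count of rings in the barrier: the ring at \(b\) itself may contain left-labelled vertices, so only rings strictly beyond \(b\) serve as barriers. Both are handled by choosing \(w\) first after \(F^*\) separately for each \(w\), which is legitimate because we only need marginal probabilities in the union bound.
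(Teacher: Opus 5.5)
Your proposal is correct and matches the paper's proof step by step: you reduce an LR--Slash edge outside \(\mathcal E_h\) to a wrongly labelled vertex beyond \(V_h\), process that vertex immediately after \(F^*\) via Lemma~\ref{lem:adaptive-wilson}, and sum the ring-barrier bound over the at most \(2n\) vertices at each distance. The only difference is cosmetic: you count just the \(d-1\) rings strictly between \(R_b\) and the starting ring as barriers, while the paper also counts the starting ring (giving \(\rho^{h+d}\) in its indexing), and this changes only the constant \(C_4\).
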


\begin{proof}
Fix \(F^*\). Consider first a real vertex \(x\) lying to the right of
\(V_h\). Then, for some integer \(d\ge1\),
\[
\pi(x)=b+h+d.
\]
If \(x\in F^*\), it is already right-labelled. Otherwise, conditional
on \(F^*\), Lemma~\ref{lem:adaptive-wilson} allows us to process \(x\)
next without changing the conditional law of the completed tree.

For \(x\) to become left-labelled, its random walk must cross the
\(h+d\) rings
\[
R_{b+1},R_{b+2},\ldots,R_{b+h+d}
\]
without hitting the right-labelled part of \(F^*\). Each of these
rings contains a right-labelled vertex by the front property.
Therefore the ring-barrier estimate gives
\[
\mathbb P\bigl(
x\text{ is left-labelled}\mid F^*
\bigr)
\le
\rho^{h+d}.
\]
Symmetrically, if
\[
\pi(x)=a-h-d,
\qquad d\ge1,
\]
then
\[
\mathbb P\bigl(
x\text{ is right-labelled}\mid F^*
\bigr)
\le
\rho^{h+d}.
\]

If
\[
\Slash_{\mathrm{LR}}(T)\not\subset\mathcal E_h,
\]
then some LR--Slash edge has both endpoints outside \(V_h\). Since
adjacent vertices lie on the same or neighboring rings, both endpoints
lie on the same side of \(V_h\). One endpoint must therefore have the
wrong label: either a left-labelled vertex lies to the right of
\(V_h\), or a right-labelled vertex lies to its left.

For every \(d\ge1\), there are at most \(n\) vertices at distance
\(d\) on either side. Hence
\[
\begin{aligned}
\mathbb P\left(
\Slash_{\mathrm{LR}}(T)\not\subset\mathcal E_h
\,\middle|\,
F^*
\right)
&\le
2n\sum_{d\ge1}\rho^{h+d}\\
&=
\frac{2n\rho}{1-\rho}\,\rho^h.
\end{aligned}
\]
This proves the lemma.
\end{proof}

We now finish the proof. Let \(\ell\ge1\), and choose
\[
h:=\left\lfloor
\frac{\ell}{2C_E'(n)}
\right\rfloor.
\]
For the bounded values of \(\ell\) for which \(h=0\), the conclusion
follows after increasing \(C(n)\). We may therefore assume \(h\ge1\).

If \(W_\tau\le h\) and
\[
\Slash_{\mathrm{LR}}(T)\subset\mathcal E_h,
\]
then
\[
|\Slash_{\mathrm{LR}}(T)|
\le
|\mathcal E_h|
\le
C_E'(n)h
\le
\ell.
\]
Therefore
\[
\{|\Slash_{\mathrm{LR}}(T)|>\ell\}
\subseteq
\{W_\tau>h\}
\cup
\{\Slash_{\mathrm{LR}}(T)\not\subset\mathcal E_h\}.
\]
Using the overshoot estimate and
Lemma~\ref{lem:slash-localization},
\[
\begin{aligned}
\mathbb P\bigl(
|\Slash_{\mathrm{LR}}(T)|>\ell
\bigr)
&\le
\mathbb P(W_\tau>h)\\
&\quad+
\mathbb E\left[
\mathbb P\left(
\Slash_{\mathrm{LR}}(T)\not\subset\mathcal E_h
\,\middle|\,
F^*
\right)
\right]\\
&\le
C_2\rho_2^h+C_4\theta_4^h.
\end{aligned}
\]
Since \(h\) is proportional to \(\ell\), there exist constants
\(C=C(n)>0\) and \(\delta=\delta(n)\in(0,1)\), independent of \(m\),
such that
\[
\mathbb P\bigl(
|\Slash_{\mathrm{LR}}(T)|>\ell
\bigr)
\le
C\delta^\ell.
\]
Together with the already absorbed case \(m=2\), this proves
Theorem~\ref{thm:Slash}.
\end{proof}

\section{Disclosures}
\subsection{Conflict of interest statement}
No conflict of interest.

\subsection{Data access statement}
No datasets were generated or analyzed as part of the results of this article.

\subsection{Ethics statement}
Not applicable.

\subsection{Funding statement}
This work received no external funding.

\subsection{Acknowledgment}
We thank the referee for careful reading of the manuscript and suggesting many improvements.

%\bibliographystyle{plain} 
%\bibliography{name} 

\begin{thebibliography}{10}

\bibitem{AthreyaJarai2004}
Siva~R. Athreya and Antal~A. J{\'a}rai.
\newblock Infinite volume limit for the stationary distribution of abelian
  sandpile models.
\newblock {\em Communications in Mathematical Physics}, 249(1):197--213, 2004.

\bibitem{Dhar1999AbelianSandpile}
Deepak Dhar.
\newblock The abelian sandpile and related models.
\newblock {\em Physica A}, 263(1-4):4--25, 1999.

\bibitem{EckmannNagnibedaPerriard2023}
Jean-Pierre Eckmann, Tatiana Nagnibeda, and Aymeric Perriard.
\newblock Abelian sandpiles on cylinders.
\newblock {\em J. Phys. A: Mathematical and Theoretical}, 56(17):175001, 2023.

\bibitem{IzmailianKenna2015}
Nickolay~Sh. Izmailian and Ralph Kenna.
\newblock Exact finite-size corrections for the spanning-tree model under
  different boundary conditions.
\newblock {\em Physical Review E}, 91(2):022129, 2015.

\bibitem{KenyonWilson2015}
Richard~W. Kenyon and David~B. Wilson.
\newblock Spanning trees of graphs on surfaces and the intensity of loop-erased
  random walk on planar graphs.
\newblock {\em Journal of the American Mathematical Society}, 28(4):985--1030,
  2015.

\bibitem{lyons2016probability}
Russell Lyons and Yuval Peres.
\newblock {\em Probability on Trees and Networks}, volume~42 of {\em Cambridge
  Series in Statistical and Probabilistic Mathematics}.
\newblock Cambridge University Press, New York, 2016.
\newblock Available at \url{https://rdlyons.pages.iu.edu/}.

\bibitem{SchmidtVerbitskiy2009}
Klaus Schmidt and Evgeny Verbitskiy.
\newblock Abelian sandpiles and the harmonic model.
\newblock {\em Communications in Mathematical Physics}, 292(3):721--759, 2009.

\bibitem{Wilson1996}
David~B. Wilson.
\newblock Generating random spanning trees more quickly than the cover time.
\newblock In {\em Proceedings of the Twenty-Eighth Annual ACM Symposium on
  Theory of Computing (STOC '96)}, pages 296--303, New York, NY, USA, 1996.
  ACM.

\bibitem{Windisch2010}
David Windisch.
\newblock Random walks on discrete cylinders with large bases and random
  interlacements.
\newblock {\em The Annals of Probability}, 38(2):841--895, 2010.

\bibitem{YanZhang2011}
Weigen Yan and Fuji Zhang.
\newblock Enumeration of spanning trees of graphs with rotational symmetry.
\newblock {\em Journal of Combinatorial Theory, Series A}, 118(4):1270--1290,
  2011.

\bibitem{ZhangLuJin2024}
Jingyuan Zhang, Fuliang Lu, and Xian'an Jin.
\newblock Counting spanning trees of $(1,n)$-periodic graphs.
\newblock {\em Discrete Applied Mathematics}, 340:63--76, 2024.

\end{thebibliography}

\end{document}